\newtheorem{sat}{Theorem}[section]		\newtheorem{lem}[sat]{Lemma}
			\newtheorem{prop}[sat]{Proposition}
\newtheorem*{defi*}{Definition}			\newtheorem*{bei*}{Example}
\newtheorem*{sat*}{Theorem}				\newtheorem*{kor*}{Corollary}
\newtheorem*{rmk*}{Remark}					
\let\ssection=\section
\renewcommand{\section}{\setcounter{equation}{0}\ssection}
\newtheorem*{namedtheorem}{\theoremname}
\newcommand{\theoremname}{testing}
\newenvironment{named}[1]{\renewcommand{\theoremname}{#1}\begin{namedtheorem}}{\end{namedtheorem}}
\theoremstyle{remark}
\newtheorem*{bem}{Remark}
\newcommand{\BC}{\mathbb C}			
\newcommand{\BR}{\mathbb R}			
\newcommand{\BS}{\mathbb S}			\newcommand{\BZ}{\mathbb Z}
		\newcommand{\CL}{\mathcal L}
\newcommand{\actson}{\curvearrowright}
\DeclareMathOperator{\SL}{SL}		
\DeclareMathOperator{\GL}{GL}		
\DeclareMathOperator{\Id}{Id}		
\DeclareMathOperator{\Hom}{Hom}		
\DeclareMathOperator{\SU}{SU}
\newcommand{\comment}[1]{}
\DeclareMathOperator{\SO}{SO}
\DeclareMathOperator{\Stab}{Stab}
\DeclareMathOperator{\U}{U}
\DeclareMathOperator{\Span}{Span}
\DeclareMathOperator{\Her}{Her}
\begin{document}

\title[]{A remark on the homotopy equivalence $\SU_n\simeq\SL_n\BC$}
\author{Juan Souto}
\thanks{The author has been partially supported by NSF grant DMS-0706878, NSF CAREER award 0952106 and the Alfred P. Sloan Foundation}

\begin{abstract}
Being a maximal compact subgroup of $\SL_n\BC$, $\SU_n$ is a deformation retract of the former group. In this note we prove that, for sufficiently large $n$, there is no retraction of $\SL_n\BC$ to $\SU_n$ which preserves commutativity.
\end{abstract}
\maketitle

\section{Introduction}

Using for example the polar decomposition of matrices in $\SL_n\BC$, it is easy to see that $\SU_n$ is a strong deformation retract of $\SL_n\BC$. In fact, there are quite a few ways to prove this result. For instance, it also follows from the Gramm-Schmidt orthogonalization process or from the fact that the symmetric space $\SL_n\BC/\SU_n$ is contractible. Finally, it is a special case of the theorem asserting that every semisimple Lie group retracts to any of its maximal compact subgroups \cite{Helgason}.

The goal of this note is to observe that there is no retraction of $\SL_n\BC$ to $\SU_n$ which, even so mildly, preserves the group structure. More concretely we show:

\begin{sat}\label{no-retract}
For $n\ge 8$, there is no retraction of $\SL_n\BC$ to $\SU_n$ preserving commutativity.
\end{sat}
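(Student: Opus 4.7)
Suppose for contradiction that a commutativity-preserving retraction $r\colon\SL_n\BC\to\SU_n$ exists. My first step is to show that $r$ sends the diagonal maximal torus $T\subseteq\SL_n\BC$ into its compact part $T_K=T\cap\SU_n$, which is a maximal torus of $\SU_n$. Indeed, $r(T)$ is path-connected, consists of pairwise-commuting elements of $\SU_n$, and contains the identity. Hence the closure of the subgroup it generates is a closed connected abelian subgroup of $\SU_n$, that is, a subtorus; since it must contain $T_K$ (on which $r$ is the identity) and $T_K$ is already maximal, this subtorus equals $T_K$. We conclude $r(T)\subseteq T_K$, and by conjugating by elements of $\SU_n$ the analogous statement holds for every $\SU_n$-conjugate of $T$.

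I would then extend this via centralizers. For any $A\in\SU_n$ and any $B\in\SL_n\BC$ commuting with $A$, the image $r(B)$ commutes with $r(A)=A$, so $r$ restricts to a commutativity-preserving retraction $Z_{\SL_n\BC}(A)\to Z_{\SU_n}(A)$ of a reductive complex group onto its compact form. Letting $A$ vary over $\SU_n$, the resulting family of restricted retractions must be mutually compatible on overlapping centralizers. This produces a rigid system of constraints governing $r$ on the semisimple locus, and, by continuity and the density of regular semisimple elements, also on the non-semisimple locus.

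The final and crucial step is to exploit the hypothesis $n\ge 8$ to derive a contradiction. My plan is to exhibit a concrete configuration of commuting elements in $\SL_n\BC$ --- most naturally involving either a non-semisimple/unipotent abelian subgroup whose real dimension outstrips $n-1$ (the dimension of a maximal torus of $\SU_n$), or a finite non-abelian subgroup with a specific commutation pattern realizable only once $n$ is sufficiently large --- such that the constraints from the previous two paragraphs force $r$ to either destroy commutativity of some commuting pair, or to collapse a transversal family of abelian subgroups into a single maximal torus of $\SU_n$ in an incompatible way.

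The principal obstacle is this last step: producing the concrete obstruction sensitive to the threshold $n=8$. The earlier reductions to tori and to centralizers are essentially formal, but turning the resulting rigidity into a contradiction is likely to require either a subtle topological argument (for instance, a class in the cohomology of $\Hom(\BZ^k,\SU_n)$ that cannot be split off from $\Hom(\BZ^k,\SL_n\BC)$ via a ``diagonal'' retraction induced by a single $r$) or an explicit matrix construction whose commutation graph cannot be accommodated in $\SU_n$ precisely once $n\ge 8$.
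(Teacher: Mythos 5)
There is a genuine gap: everything after your first two (correct but essentially formal) reductions is a plan rather than a proof, and the plan as sketched is unlikely to work as stated. The only constraint you extract from commutativity preservation is one-directional (commuting pairs go to commuting pairs), and a retraction $r$ is free to collapse subgroups; for instance it could send a huge abelian unipotent subgroup to the identity, so no contradiction can come from comparing the dimension of abelian subgroups of $\SL_n\BC$ with the rank $n-1$ of $\SU_n$, and a finite subgroup's ``commutation graph'' imposes no constraint at all since images of non-commuting elements are unconstrained. Your suggested cohomological fallback via $\Hom(\BZ^k,\SU_n)\hookrightarrow\Hom(\BZ^k,\SL_n\BC)$ is also blocked: that inclusion is a homotopy equivalence (this is precisely the Pettet--Souto theorem that motivates the paper), so there is no class there to obstruct a retraction. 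What is missing is the mechanism that prevents $r$ from collapsing: since $r\circ i=\mathrm{id}$ and $i$ is a homotopy equivalence, $r$ is a homotopy equivalence, so images of conjugates of $\SU_k$ carry nontrivial classes in $H_{k^2-1}$, and centralizers of their images must therefore be homologically large. This is the engine of the paper's argument, and nothing in your proposal plays that role.

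For comparison, the paper's route is: (i) a homological lemma saying that an element (or commuting set) whose centralizer carries a class in top-range homology must be central, respectively must act as a homothety on a hyperplane, i.e.\ lie in a subgroup $\sigma_{Q_0}^L$ stabilizing a line $L$; (ii) applying this to $\phi(\sigma_Q^L)$ for every hermitian form $Q$ and line $L$ produces a continuous map $\Phi_Q$ of $P\BC^n$, which a further centralizer-dimension estimate (this is where $n\ge 8$ enters) shows to be collinear, so the Fundamental Theorem of Projective Geometry yields a linear or antilinear isometry $\CL_Q\colon(\BC^n,Q)\to(\BC^n,Q_0)$ inducing it; (iii) a compatibility property of the $\CL_Q$'s for forms sharing an orthogonal splitting is then contradicted by an explicit one-parameter family $Q_t$ whose polar parts have eigenspaces with different limits as $t\to\pm\infty$. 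Your centralizer-restriction observation is consistent with step (ii) in spirit, but without the homological non-collapsing input, the rigidity you invoke (``a rigid system of constraints'') has no content, and the threshold $n\ge 8$ cannot emerge. As it stands the proposal does not constitute a proof.
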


Before sketching the proof of Theorem \ref{no-retract} we explain briefly what motivated the author to consider the existence or non-existence of commutativity preserving retractions of $\SL_n\BC$ to $\SU_n$. In \cite{patat}, Pettet and the author of this note considered the relation between the representation varieties of $\BZ^k$ in $\SU_n$ and $\SL_n\BC$ and proved that the standard inclusion
$$\Hom(\BZ^k,\SU_n)\hookrightarrow\Hom(\BZ^k,\SL_n\BC)$$
is a homotopy equivalence. If a commutativity preserving retraction of $\SL_n\BC$ to $\SU_n$ existed, then the main result in \cite{patat} would follow almost directly. Given that the proof in \cite{patat} is rather involved, the author was relieved by the conclusion of Theorem \ref{no-retract}.

Also notice that there are highly non-trivial homotopies of $\SL_n\BC$ which preserve commutativity. For instance, there is such a homotopy which retracts $\SL_n\BC$ into the set of diagonalizable matrices \cite{patat}. From this fact, it is easy to deduce (see \cite{patat}) that there is in fact a retraction of $\SL_2\BC$ to $\SU_2$ which preserves commutativity. This shows that some restriction on the $n$ in Theorem \ref{no-retract} is necessary. The bound $n\ge 8$ is due to the rather unsophisticated arguments in our proof. We would however expect that the obvious generalization of Theorem \ref{no-homotopy} remains true for most algebraic groups $G$. For instance, the author expects the following question to have a negative answer:
\medskip

\noindent{\bf Question.} {\em Is there, for $n\ge 4$, a retraction of $\SO_\BR(n,1)_0$ to $\SO_\BR(n)$ which preserves commutativity?}
\medskip

We sketch now the strategy of the proof of Theorem \ref{no-retract}. Arguing by contradiction, assume that there is a continuous homotopy
\begin{equation}\label{eq:homotopy}
\phi:[0,1]\times\SL_n\BC\to\SL_n(\BC),\ \ (t,A)\mapsto\phi_t(A)
\end{equation}
satisfying:
\begin{itemize}
\item[(A1)] If $A,B\in\SL_n\BC$ commute, then so do $\phi_t(A)$ and $\phi_t(B)$ for all $t$.
\item[(A2)] $\phi_0(A)=A$ and $\phi_1(A)\in\SU_n$ for all $A\in\SL_n\BC$.
\end{itemize}
Our first goal is to show that the maps $\phi_t$ fix the center 
$$Z_{\SL_n\BC}(\SL_n\BC)=Z_{\SU_n}(\SU_n)$$
of $\SL_n\BC$. In particular, Theorem \ref{no-retract} follows when we prove:

\begin{prop}\label{no-homotopy}
If $n\ge 8$, then there is no homotopy equivalence 
$$\phi:\SL_n\BC\to\SU_n$$ 
preserving commutativity and with $\phi(Z_{\SL_n\BC}(\SL_n\BC))=Z_{\SU_n}(\SU_n)$.
\end{prop}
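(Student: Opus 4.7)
The strategy is to argue by contradiction: assume such a $\phi$ exists.

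For $n\ge 8$, embed four pairwise commuting copies $G_1,\ldots,G_4\cong\SL_2\BC$ into $\SL_n\BC$ as block-diagonal subgroups acting on disjoint pairs of coordinates, and let $K:=G_1\cdots G_4\cong(\SL_2\BC)^4\subset\SL_n\BC$. Commutativity-preservation forces the images $\phi(G_i)$ to pairwise commute in $\SU_n$. Moreover, each inclusion $G_i\hookrightarrow\SL_n\BC$ induces (up to sign) the generator of $\pi_3(\SL_n\BC)=\BZ$, so since $\phi_*$ is an isomorphism, each restriction $\phi|_{G_i}\colon \SL_2\BC\to\SU_n$ has degree $\pm1$ on $\pi_3$.

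The second step is a rigidity result for the images. Let $H_i\subset\SU_n$ be the smallest closed connected subgroup containing $\phi(G_i)$: it is a compact connected Lie subgroup with $\pi_3(H_i)\twoheadrightarrow\pi_3(\SU_n)=\BZ$, so each $H_i$ contains a copy of $\SU_2$, and the $H_i$ pairwise commute. Using the structure of centralizers in $\SU_n$ — the centralizer of a standard block-diagonal $\SU_2^k$ is $\SU_{n-2k}$ modulo centre — four such pairwise commuting subgroups inside $\SU_8$ are forced, up to a simultaneous conjugation by an element of $\SU_n$, to be the standard block-diagonal $\SU_2$'s. The centre-preservation $\phi(Z)=Z$ enters here to pin down the block positions and to exclude exotic embeddings $\SU_2\to\SU_n$ (for example via symmetric-power representations), which would act incorrectly on the centre.

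After this reduction, $\phi|_K$ factors as $K\to(\SU_2)^4\hookrightarrow\SU_n$, and the inclusion of block-diagonal $(\SU_2)^4$ into $\SU_n$ matches the inclusion of block-diagonal $(\SL_2\BC)^4$ into $\SL_n\BC$ up to the homotopy equivalence. On rational cohomology the restrictions of the exterior-algebra generators $x_3,x_5,\ldots,x_{2n-1}\in H^*(\SU_n;\BQ)$ to the four-sphere product $K\simeq(S^3)^4$ vanish for degree reasons outside $2k-1\in\{3,9,\ldots\}$; so the $(\SL_2\BC)^4$-subgroup alone does not detect the contradiction. I would therefore couple the above with the larger pairwise commuting embedding $(\SL_4\BC)^2\subset\SL_8\BC$, to which the same rigidity applies to force $\phi$ to factor through $(\SU_4)^2\subset\SU_8$. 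Comparing the two reductions along the natural inclusion $(\SL_2\BC)^4\hookrightarrow(\SL_4\BC)^2$ over-determines $\phi^*$ restricted to cohomology in the mid-range degrees (where both $\SL_4$-generators and the mixed Chern-class cross terms survive), producing the desired contradiction.

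The main obstacle is the rigidity in the second step: bounding the topological closures $H_i$ and using the centre-preservation to rule out the potential non-standard $\SU_2$'s in $\SU_8$. A secondary subtlety is Step~3 — the purely rational comparison on $(\SL_2\BC)^4$ is insufficient, so the argument must be run on a larger commuting structure, or equivalently must employ integral or K-theoretic invariants, or invoke the homotopy type of $\Hom(\BZ^k,-)$ as in \cite{patat}.
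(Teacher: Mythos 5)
Your outline has three genuine gaps, and the most serious one is structural: you treat $\phi$ as if it were (approximately) a group homomorphism. Commutativity preservation only controls images of commuting \emph{pairs of elements}; $\phi(G_i)$ is merely a subset of $\SU_n$, not a subgroup, and replacing it by the closed connected subgroup $H_i$ it generates loses exactly the control you need afterwards. In particular, the asserted factorization of $\phi|_K$ through a block-diagonal $(\SU_2)^4$ does not follow from $\phi(G_i)\subset H_i$: for $A=A_1A_2A_3A_4$ with $A_i\in G_i$, nothing constrains $\phi(A)$, since $A$ need not commute with any of the elements whose images you understand. Second, the rigidity claim you lean on is false as stated: four pairwise commuting copies of $\SU_2$ inside $\SU_8$ need not be standard blocks up to conjugacy --- take $\BC^8\cong(\BC^2\otimes\BC^2)\oplus(\BC^2\otimes\BC^2)$ with $H_1,H_2$ acting on the two tensor factors of the first summand and $H_3,H_4$ on those of the second. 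The hypothesis $\phi(Z_{\SL_n\BC}(\SL_n\BC))=Z_{\SU_n}(\SU_n)$ concerns only the centre of the ambient group and, since $\phi$ is not a homomorphism, gives no information about how $\phi$ interacts with centres or representations of subgroups, so it cannot be used to exclude such exotic configurations. Third, and decisively, no contradiction is ever reached: you concede that the rational-cohomology comparison on $(S^3)^4$ dies for degree reasons and then defer to unspecified integral or $K$-theoretic invariants or to \cite{patat}; but the main theorem of \cite{patat} --- that $\Hom(\BZ^k,\SU_n)\hookrightarrow\Hom(\BZ^k,\SL_n\BC)$ is a homotopy equivalence --- is precisely the statement a commutativity-preserving retraction would imply, so it cannot be the source of a contradiction.

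For comparison, the paper's proof tracks the only objects that commutativity preservation really lets you follow, namely centralizers of \emph{abelian} pieces. To each hermitian form $Q$ and line $L\in P\BC^n$ it attaches the circle subgroup $\sigma_Q^L\subset\SU(Q)$, shows via the homological size of centralizers (Lemmas \ref{big-center} and \ref{big-center2}, where the hypothesis $n\ge 8$ and the homotopy-equivalence assumption enter) that $\phi(\sigma_Q^L)$ lies in some $\sigma_{Q_0}^{L'}$, and thereby builds for each $Q$ a continuous collinear self-map $\Phi_Q$ of $P\BC^n$. The Fundamental Theorem of Projective Geometry then upgrades $\Phi_Q$ to a linear or antilinear isometry $\CL_Q:(\BC^n,Q)\to(\BC^n,Q_0)$, and the contradiction comes from the compatibility constraint of Lemma \ref{compatible} tested against an explicit one-parameter family of forms $Q_t$ via polar decomposition. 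If you want to salvage your approach, the missing idea is to convert statements about images of non-abelian subgroups (which you cannot control) into statements about centralizers of commuting sets (which you can), and that is essentially what forces an argument of the paper's type.
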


The basic idea of the proof of Proposition \ref{no-homotopy} is to associate to any such $\phi$ and any hermitian form $Q$ on $\BC^n$ a linear map $\CL_Q:\BC^n\to\BC^n$ with the property that if $A\in\SU(Q)$ is an isometry of $(\BC^n,Q)$ and $v\in\BC^n$ is an eigenvector of $A$, then $\CL_Q(v)$ is also an eigenvector of $\phi(A)$. This condition imposes rather strong relations between the linear maps $\CL_Q$ and $\CL_{Q'}$ where $Q$ and $Q'$ are different hermitian forms. In fact, we end the proof of Proposition \ref{no-homotopy} showing that these relations cannot be satisfied. 

We derive the existence of the linear map $\CL_Q:\BC^n\to\BC^n$ applying the Fundamental Theorem of Projective Geometry to a suitably constructed map
$$\Phi_Q:P\BC^n\to P\BC^n$$
The construction of $\Phi_Q$ and the proof that it is a colinearity, follow from considerations on the homological properties of the centralizers of certain subgroups of $\SL_n\BC$; essentially we use the fact that complicated elements in $\SL_n\BC$ have much smaller centralizers than simple ones.

\begin{bem}
Many of the steps in the proof of Theorem \ref{no-retract} could be formulated in the more abstract language of algebraic groups. However, for the sake of concreteness, we have chosen to take a rather down-to-earth approach. Altogether, we will only need standard facts of linear algebra and algebraic topology. Not knowing any preferable linear algebra reference work, we refer blankly to Bourbaki \cite{Bourbaki}. Hatcher's book \cite{Hatcher} contains, by far, all the algebraic topology needed below.
\end{bem}

\noindent{\bf Acknowledgements.} The author wishes to thank Alejandro \'Adem, Mladen Bestvina, Rosario Clement, Jos\'e Manuel G\'omez, Lars Louder, Rupert McCallum, Gopal Prasad and Ralf Spatzier for their help, interest, remarks, etc... This work would have not been possible without the collaboration of Alexandra Pettet.

\section{Preliminaries}
In this section we remind the reader of a few well-known facts and fix notation used throughout the paper:

\subsection{Linear algebra}
Throughout this note we will work in the ambient space $\BC^n$. We denote by $\Her(\BC^n)$ the cone of hermitian forms on $\BC^n$. Elements in $\Her(\BC^n)$ will be denoted by $Q$; in particular the {\em standard hermitian form}
$$Q_0((u_i),(v_i))=\sum_i\bar u_iv_i$$
will be denoted $Q_0$. Given $Q\in\Her(\BC^n)$ and $V\subset\BC^n$ a linear subspace, let 
$$V^{\perp_Q}=\{u\in\BC^n\vert Q(u,v)=0\ \hbox{for all}\ v\in V\}$$
be the $Q$-orthogonal complement of $V$. For the standard hermitian form we will use the notation $V^\perp=V^{\perp_{Q_0}}$.

The subgroups of $\GL_n\BC$ and $\SL_n\BC$ preserving $Q\in\Her(\BC^n)$ will be denoted by
\begin{align*} 
\U(Q)&=\{A\in\GL_n\BC\vert Q(Au,Av)=Q(u,v)\ \forall u,v\in\BC^n\}\\
\SU(Q)&=\U(Q)\cap\SL_n\BC
\end{align*}
Notice that in particular $\U_n=\U(Q_0)$ and $\SU_n=\SU(Q_0)$.

Given a group $G$ and $X\subset G$ a subset we denote by
$$Z_G(X)=\{g\in G\vert gx=xg\ \hbox{for all}\ x\in X\}$$
the {\em centralizer} of $X$ in $G$. In particular, $Z_G(G)$ is just the {\em center} of the group $G$. At this point we wish to remind the reader that 
$$Z_{\SL_n\BC}(\SL_n\BC)=Z_{\SU_n}(\SU_n)$$ 
is the discrete and disconnected subgroup consisting of homotheties of $\BC^n$ of ratio an $n$-th root of unity.

Under a {\em commuting subset} $X\subset G$ of a group $G$ we understand a subset with the property that $xx'=x'x$ for all $x,x'\in X$. Recall that if $X$ is a commuting subset of $\GL_n\BC$ and every element of $X$ is diagonalizable, then $X$ is {\em simultanously diagonalizable}, meaning that there is some direct sum decomposition $\BC^n=\oplus_i V_i$ of $\BC^n$ such that every $A\in X$ preserves $V_i$ and restricts to a homothety $A\vert_{V_i}$ of $V_i$ for all $i$. Recall also that if we choose the direct sum decomposition $\BC^n=\oplus_i V_i$ in such a way that the factors $V_i$ are the non-trivial intersections of eigenspaces of all elements in $X$, then it is canonical. More precisely, we will use below the following well-known fact:

\begin{lem}\label{dsd}
Let $X\subset\GL_n\BC$ be a commuting subset consisting of diagonalizable elements. Then there is a uniquely determined direct sum decomposition $\BC^n=\oplus_iV_i$ such that
$$Z_{\GL_n(\BC)}(X)=\{A\in\GL_n\BC\vert AV_i=V_i\ \hbox{for all}\ i\}$$
Here, the factors $V_i$ are the non-trivial intersections of the eigenspaces of the elements in $X$. In particular, the restriction of $g\in X$ to any factor $V_i$ is a homothety. \qed
\end{lem}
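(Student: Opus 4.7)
The plan is to build the decomposition by simultaneously diagonalizing the elements of $X$, then verify the centralizer identity directly, and finally read off uniqueness from the intrinsic description of the pieces.

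Simultaneous diagonalizability is first established for a finite commuting family $X_0\subset\GL_n\BC$ of diagonalizable matrices by the standard induction on $|X_0|$: fixing some $x\in X_0$, every other $y\in X_0$ preserves each eigenspace of $x$ (since $xy=yx$), and the restriction $y|_{E_\mu(x)}$ is still diagonalizable, so one diagonalizes inductively on each $E_\mu(x)$ and reassembles. To handle an arbitrary commuting subset $X$, pick a finite $X_0\subset X$ whose common eigenspace refinement $\BC^n=\bigoplus_i V_i$ has the greatest possible number of pieces; this makes sense because any such refinement has at most $n$ pieces. For any further $x\in X$, commutativity with $X_0$ forces $x$ to preserve each $V_i$; diagonalizability gives an eigenspace decomposition of $x|_{V_i}$ within each $V_i$, but maximality of $X_0$ prevents this from refining the decomposition, so $x|_{V_i}$ has only one eigenvalue and is thus a scalar operator.

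With $\BC^n=\bigoplus_i V_i$ in place and every $x\in X$ acting on $V_i$ as a scalar $\lambda_i(x)\cdot\Id$, the centralizer identity follows by inspection. If $A\in\GL_n\BC$ commutes with each $x\in X$, then $A$ preserves every eigenspace of every such $x$ (since $xv=\mu v$ forces $x(Av)=\mu(Av)$), hence preserves each intersection $V_i=\bigcap_{x\in X}E_{\lambda_i(x)}(x)$. Conversely, if $AV_i=V_i$ for every $i$, then the scalar action of each $x$ on $V_i$ commutes automatically with $A|_{V_i}$, so $Ax=xA$ on every $V_i$ and therefore on all of $\BC^n$. Uniqueness is immediate since the $V_i$ are characterized intrinsically as the non-trivial intersections of eigenspaces of elements of $X$.

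The only genuine subtlety lies in passing from a finite commuting family to an arbitrary one via the maximality argument above; once simultaneous diagonalizability is available, both the centralizer identity and the uniqueness statement are essentially formal.
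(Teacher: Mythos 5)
Your proof is correct: the paper states Lemma \ref{dsd} as a well-known fact and gives no proof, and your argument (simultaneous diagonalization of a finite commuting family by induction, passage to an arbitrary commuting set via a finite subfamily whose common eigenspace decomposition has the maximal number of pieces, then the direct verification of the centralizer identity and of uniqueness) is the standard one. The only point worth making explicit is that the pieces $V_i$ produced from your maximal finite subfamily $X_0$ coincide with the non-trivial intersections of eigenspaces of \emph{all} elements of $X$, which follows at once since every $x\in X$ acts on each $V_i$ as a scalar.
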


We will refer to the direct sum decomposition provided by Lemma \ref{dsd} as the {\em diagonalizing direct sum decomposition} of $X$. Before moving on, recall that every element in $\SU_n$ is diagonalizable and that the corresponding eigenspaces are orthogonal to each other.

Often, we will write something like $\SU_r\times\U_s\subset\GL_n\BC$ for $r+s\le n$. By this we mean the inclusion 
\begin{equation}\label{eq-inclusion}
(A,B)\mapsto 
\left(
\begin{array}{ccc}
A  & 0 & 0 \\
0 & B & 0 \\
0& 0  & \Id
\end{array}
\right)
\end{equation}
where $0$ is the matrix, of the appropriate size, whose entries are all $0$, and where $\Id$ is the identity matrix of again the appropriate size.

Finally, if a group $G$ acts on some set $X$ and $X_0\subset X$ is some subset, we denote by $\Stab_G(X_0)$ the {\em stabilizer} of $X_0$ under the action $G\actson X$. Below we will consider the standard actions of $\SL_n\BC$ on $\BC^n$, the set of linear subspaces of $\BC^n$, projective space $P\BC^n$, etc... For $G\subset\SL_n\BC$ and $X_0$ any set of similar objects we will often just write $\Stab_G(X)$ without making explicit mention to the action; we hope that this does not cause any confusion.

\subsection{Topology}
Through out this note we will only consider homology $H_*(\cdot)$ and cohomology $H^*(\cdot)$ with coefficients in $\BZ$; we feel therefore justified to drop any reference to the coefficients from our notation. 

The cohomology groups of $\U_n$ and $\SU_n$ are well-known \cite[p.434]{Hatcher}. The key fact needed in this note is that inclusions between these groups induce surjections in cohomology. More precisely, for any $k<n$ all the arrows in the following diagram, the pull-backs of the standard inclusions, are surjective:
$$\xymatrix{
H^*(\U_n)\ar[d]\ar[r] & H^*(\SU_n)\ar[d]\\
H^*(\U_k)\ar[r] & H^*(\SU_k)
}$$
We remind the reader that we have homotopy equivalences $\SL_n\BC\simeq\SU_n$ and $\GL_n\BC\simeq\U_n$. In particular, the surjectivity of $H^*(\GL_n\BC)\to H^*(\SU_k)$ for $k<n$, implies:

\begin{lem}\label{lem:class}
The image under the standard inclusion $\SU_k\hookrightarrow\GL_n\BC$ of the fundamental class of $[\SU_k]$ is non-trivial in $H_{k^2-1}(\GL_n\BC;\BZ)$.\qed
\end{lem}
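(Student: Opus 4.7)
My plan is to use the standard Kronecker-pairing trick: to show that a homology class in a space $Y$ is non-zero, it suffices to exhibit a cohomology class on $Y$ which pairs non-trivially with it. Concretely, let $i:\SU_k\hookrightarrow\GL_n\BC$ denote the inclusion in question; I want to produce $\beta\in H^{k^2-1}(\GL_n\BC)$ with $\langle\beta,i_*[\SU_k]\rangle\neq 0$, and then invoke naturality to rewrite this as $\langle i^*\beta,[\SU_k]\rangle\neq 0$.

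First I would recall that $\SU_k$ is a closed connected orientable manifold of real dimension $k^2-1$, so that $H^{k^2-1}(\SU_k)\cong\BZ$ is generated by a class $\alpha$ dual to the fundamental class, satisfying $\langle\alpha,[\SU_k]\rangle=1$. Next I would invoke the surjectivity statement that immediately precedes the lemma: combining the homotopy equivalence $\GL_n\BC\simeq\U_n$ with the fact that the arrows in the commutative square pulling back along the inclusions $\SU_k\hookrightarrow\U_k\hookrightarrow\U_n$ (equivalently $\SU_k\hookrightarrow\SU_n\hookrightarrow\U_n$) are all surjective, one concludes that the composition $H^*(\GL_n\BC)\to H^*(\SU_k)$ induced by $i$ is itself surjective. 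Hence the generator $\alpha$ lifts to a class $\beta\in H^{k^2-1}(\GL_n\BC)$ with $i^*\beta=\alpha$, and then
$$\langle\beta,i_*[\SU_k]\rangle=\langle i^*\beta,[\SU_k]\rangle=\langle\alpha,[\SU_k]\rangle=1,$$
which forces $i_*[\SU_k]\neq 0$ in $H_{k^2-1}(\GL_n\BC)$, as desired.

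There is not really an obstacle here: the lemma is a formal consequence of the surjectivity input, which the paper has already quoted from Hatcher as a standard fact. The only bookkeeping points are (i) to remember that the identification $H^*(\GL_n\BC)\cong H^*(\U_n)$ is what allows the surjectivity diagram above to be applied to the inclusion $\SU_k\hookrightarrow\GL_n\BC$, and (ii) to observe that the argument tacitly requires $k<n$ (or at least $k\leq n$ via the top horizontal arrow), which is the regime in which the cited diagram gives surjectivity in all directions.
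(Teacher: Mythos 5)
Your argument is correct and is exactly the one the paper intends: the lemma is stated as an immediate consequence of the surjectivity of $H^*(\GL_n\BC)\to H^*(\SU_k)$, and your Kronecker-pairing computation $\langle\beta,i_*[\SU_k]\rangle=\langle i^*\beta,[\SU_k]\rangle=1$ is just the standard way of making that implication explicit. No gaps; the bookkeeping remarks about $\GL_n\BC\simeq\U_n$ and $k<n$ match the paper's setup.
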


Many of the arguments below will involve easy estimates on the dimension of certain subgroups. We will often use, without any further mention, that $\U_n$ and $\SU_n$ are closed connected manifolds of (real) dimension
$$\dim\U_n=n^2\ \ \hbox{and}\ \ \dim\SU_n=n^2-1$$
In particular, $\SL_n\BC$ and $\GL_n\BC$ are also connected.

\section{Reducing to Proposition \ref{no-homotopy}}
In this section we reduce the claim of Theorem \ref{no-retract} to prove Proposition \ref{no-homotopy}. Our first goal is to prove that elements in $\GL_n\BC$ whose centralizer is sufficiently large (from a homological point of view) belong to the center:

\begin{lem}\label{big-center}
If $A\in\GL_n\BC$ is such that $Z_{\GL_n\BC}(A)$ carries a non-trivial class in $H_{n^2-1}(\GL_n\BC)$, then $A$ belongs to the center of $\GL_n\BC$.
\end{lem}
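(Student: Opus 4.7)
The plan is to prove the contrapositive: assuming $A \in \GL_n\BC$ is not central, I will show $H_{n^2-1}(Z_{\GL_n\BC}(A)) = 0$, so no non-trivial class can possibly be carried into $H_{n^2-1}(\GL_n\BC)$. The idea is to pin down the homotopy type of $Z_{\GL_n\BC}(A)$ as a product of small unitary groups and then run a dimension count.

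First, I would decompose the centralizer using the additive Jordan decomposition $A = S + N$ (with $S$ semisimple, $N$ nilpotent, $[S,N]=0$). Let $V_1, \ldots, V_k$ be the eigenspaces of $S$, with $m_i = \dim V_i$ and $\sum m_i = n$. Since $N$ is a polynomial in $A$, it commutes with $S$ and so preserves each $V_i$; writing $N_i = N\vert_{V_i}$, one gets
$$Z_{\GL_n\BC}(A) = \prod_{i=1}^k Z_{\GL(V_i)}(N_i).$$

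Next, I would identify the homotopy type of each factor via the Levi decomposition of complex linear algebraic groups. Each $Z_{\GL(V_i)}(N_i)$ is a connected complex algebraic group whose reductive Levi subgroup is $\prod_j \GL_{d_j^{(i)}}\BC$, where $d_j^{(i)}$ is the number of size-$j$ Jordan blocks of $N_i$; the unipotent radical is contractible and $\GL_d\BC$ deformation retracts onto $\U_d$. Hence $Z_{\GL_n\BC}(A)$ deformation retracts onto the closed connected manifold
$$K := \prod_{i,j} \U_{d_j^{(i)}}, \qquad \dim K = \sum_{i,j}(d_j^{(i)})^2.$$

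Finally, an elementary counting argument bounds $\dim K$. From $\sum_j j\cdot d_j^{(i)} = m_i$ we obtain $\sum_j (d_j^{(i)})^2 \le m_i^2$, with equality iff $N_i = 0$; together with $\sum_i m_i^2 \le n^2$, equality iff $k=1$, one sees that $\dim K = n^2$ forces $A$ to be a scalar matrix. A short case check --- $k \ge 2$ gives $\sum_i m_i^2 \le n^2 - 2$, whereas $k=1$ with $N \ne 0$ gives $\sum_j (d_j^{(1)})^2 \le (n-1)^2$ --- sharpens this to $\dim K \le n^2 - 2$ for any non-central $A$ and $n \ge 2$. As $K$ is a closed manifold of dimension strictly less than $n^2-1$, $H_{n^2-1}(K) = 0$, completing the argument. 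The only structural input is the Levi description of the centralizer of a nilpotent matrix; the rest is arithmetic.
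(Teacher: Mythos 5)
Your argument is correct: the contrapositive, the product decomposition of the centralizer over the eigenspaces of the semisimple part, the identification of its homotopy type with a product of unitary groups, and the arithmetic $\dim K\le n^2-2$ for non-central $A$ all hold, and homology of a closed manifold indeed vanishes above its dimension, so $Z_{\GL_n\BC}(A)$ cannot carry a non-trivial class in degree $n^2-1$. The overall strategy (Jordan decomposition plus a homological dimension count on the centralizer) is the same as the paper's, but you handle the nilpotent part differently and with heavier input. The paper uses the multiplicative decomposition $A=DU$ and treats the two pieces separately with only soft containments: for $D$ it identifies $Z_{\GL_n\BC}(D)$ with $\GL_{d_1}\BC\times\dots\times\GL_{d_r}\BC$ and counts; for $U$ it never computes $Z_{\GL_n\BC}(U)$ at all, but merely embeds it in the stabilizer of the fixed space $E=\Ker(U-\Id)$, a block upper-triangular group fibering over $\GL_{d_1}\BC\times\GL_{d_2}\BC$ with contractible fibers, and counts again. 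You instead invoke the precise structure of centralizers of nilpotent matrices (connectedness, Levi subgroup $\prod_j\GL_{d_j}\BC$ indexed by Jordan block multiplicities, contractible unipotent radical, retraction of a connected Lie group to its maximal compact). That input is standard but genuinely stronger than anything the paper uses; in exchange you get the exact homotopy type of the full centralizer and a single, unified inequality $\dim K\le n^2-2$, rather than the paper's two-step elimination. If you want to stay as elementary as the paper, note that your upper bound on the nilpotent factors could also be obtained by replacing the Levi-decomposition citation with the paper's trick of bounding the centralizer by the stabilizer of the fixed space of $N_i$.
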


\begin{proof}
To begin with, consider the multiplicative Jordan decomposition $A=DU$ of $A$. Recall that $D$ is diagonalizable, $U$ is unipotent and if $B$ commutes with $A$ then $B$ also commutes with both $D$ and $U$. By the commutativity properties of the Jordan decomposition we have
\begin{equation}\label{eq101}
Z_{\GL_n\BC}(A)=Z_{\GL_n\BC}(D)\cap Z_{\GL_n\BC}(U)
\end{equation}
We claim that $U=\Id$ and $D\in Z_{\GL_n\BC}(\GL_\BC)$.

In order to prove that $D\in Z_{\GL_n\BC}(\GL_\BC)$ it suffices to show that $D$ has a single eigenvalue. Let $\lambda_1,\dots,\lambda_r$ be the eigenvalues of $D$ and $E_1,\dots,E_r$ the corresponding eigenspaces. We denote by $d_i=\dim E_i$ the multiplicity of $\lambda_i$. By Lemma \ref{dsd}, the centralizer of $D$ is exactly the group of those elements in $\GL_n\BC$ preserving the eigenspace $E_i$ for each $i$. In particular, $Z_{\GL_n\BC}(D)$ is conjugated within $\GL_n\BC$ to the subgroup $\GL_{d_1}\BC\times\dots\times\GL_{d_r}\BC$. It follows that any maximal compact subgroup of $Z_{\GL_n\BC}(D)$ is conjugated within $\GL_n\BC$ to $\U_{d_1}\times\dots\times\U_{d_r}$. Since $Z_{\GL_n\BC}(D)$ is homotopy equivalent to any of its maximal compact subgroups we deduce that
$$H_d(Z_{\GL_n\BC}(D))=0\ \ \hbox{for}\ d>d_1^2+\dots+d_r^2$$
On the other hand $Z_{\GL_n\BC}(A)\subset Z_{\GL_n\BC}(D)$ carries, by assumption, a non-trivial class in $H_{n^2-1}(\GL_n\BC)$. We deduce that
$$n^2-1\le d_1^2+\dots+d_r^2$$
Taking into account that $d_1,\dots,d_r$ are positive integers with $\sum_i d_i=n$, it follows that $r=1$ and $d_1=n$. We have proved that $D$ has a single eigenvalue and hence that $D\in Z_{\GL_n\BC}(\GL_n\BC)$.

We claim now that the unipotent part $U$ of $A$ is trivial. To see that this is the case, let $E\subset\BC^n$ be the eigenspace of $U$ to the eigenvalue $1$ and notice that, since $U$ is unipotent, $E\neq 0$. Again, the centralizer $Z_{\GL_n\BC}(U)$ of $U$ stabilizes $E$. Hence, $Z_{\GL_n\BC}(U)$ is conjugated into the subgroup $G$ of $\GL_n\BC$ of matrices of the following form
$$\left(
\begin{array}{ccc}
C_{d_1,d_1}  & C_{d_1,d_2}  \\
0  & C_{d_2,d_2}
\end{array}
\right)$$
Here, the subscripts represent the size of each block and $d_1=\dim E$. Consider also $G'=\GL_{d_1}\BC\times\GL_{d_2}\BC$ the group of matrices of the following form:
$$\left(
\begin{array}{ccc}
C_{d_1,d_1}  & 0  \\
0  & C_{d_2,d_2}
\end{array}
\right)$$
The obvious projection $G\to G'$ is a fibration with fibers homeomorphic to $\BC^{d_1d_2}$. Hence $G$ and $G'$ are homotopy equivalent. The same argument as above shows first that $H_d(G,\BR)=0$ for all $d>d_1^2+d_2^2$, and then that $d_1=n$. This shows that $E=\BC^n$ and hence that $U=\Id$, as we wanted to show. This concludes the proof of Lemma \ref{big-center}.
\end{proof}

We are now ready to prove that  any homotopy 
$$\phi:[0,1]\times\SL_n\BC\to\SL_n(\BC),\ \ (t,A)\mapsto\phi_t(A)$$
satisfying conditions (A1) and (A2) from the introduction has the property that $\phi_t$ fixes pointwise the center of $\SL_n\BC$.

\begin{lem}\label{center}
$\phi_t$ fixes $Z_{\SL_n\BC}(\SL_n\BC)$ pointwise for all $t\in[0,1]$.
\end{lem}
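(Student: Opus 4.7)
The plan is to fix an arbitrary $\zeta\in Z_{\SL_n\BC}(\SL_n\BC)$ and to study the continuous path $t\mapsto\phi_t(\zeta)$ in $\SL_n\BC$. My goal is to show that this path lands entirely in the (finite, hence discrete) center, and then to invoke connectedness of $[0,1]$ together with $\phi_0(\zeta)=\zeta$ to conclude that the path is constant equal to $\zeta$.

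The substantive step is to establish that $\phi_t(\zeta)$ is central for every $t$. First I would observe that each self-map $\phi_t:\SL_n\BC\to\SL_n\BC$ is homotopic to the identity, because the restriction of $\phi$ to $[0,t]\times\SL_n\BC$, suitably reparametrized, is a homotopy from $\phi_0=\Id$ to $\phi_t$. Second, since $\zeta$ commutes with every element of $\SL_n\BC$, axiom (A1) ensures that $\phi_t(\zeta)$ commutes with every element in the image $\phi_t(\SL_n\BC)$, so that
$$\phi_t(\SL_n\BC)\subset Z_{\SL_n\BC}(\phi_t(\zeta))\subset Z_{\GL_n\BC}(\phi_t(\zeta)).$$

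From here I would examine the composition
$$\SL_n\BC\xrightarrow{\phi_t}Z_{\GL_n\BC}(\phi_t(\zeta))\hookrightarrow\GL_n\BC.$$
Because $\phi_t\simeq\Id$ on $\SL_n\BC$, this composition is freely homotopic to the standard inclusion $\SL_n\BC\hookrightarrow\GL_n\BC$. The latter is non-trivial on $H_{n^2-1}$: indeed, the generator of $H_{n^2-1}(\SL_n\BC)\cong\BZ$ is the image of the fundamental class $[\SU_n]$ under the homotopy equivalence $\SU_n\hookrightarrow\SL_n\BC$, and Lemma \ref{lem:class} (with $k=n$) asserts precisely that this fundamental class maps to a non-trivial element of $H_{n^2-1}(\GL_n\BC)$. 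It follows that the inclusion $Z_{\GL_n\BC}(\phi_t(\zeta))\hookrightarrow\GL_n\BC$ must also be non-trivial on $H_{n^2-1}$, and Lemma \ref{big-center} then yields $\phi_t(\zeta)\in Z_{\GL_n\BC}(\GL_n\BC)$. Combined with $\phi_t(\zeta)\in\SL_n\BC$, this places $\phi_t(\zeta)$ in the finite group of $n$-th roots of unity times $\Id$, which is exactly $Z_{\SL_n\BC}(\SL_n\BC)$.

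The remaining step is then automatic: the continuous path $t\mapsto\phi_t(\zeta)$ in the discrete set $Z_{\SL_n\BC}(\SL_n\BC)$ starts at $\phi_0(\zeta)=\zeta$, so it is constant, which is what we wanted. I do not anticipate a genuine obstacle here; the argument is essentially a careful bookkeeping exercise combining the two preceding lemmas with the elementary observation that $\phi_t$ is homotopic to the identity. The only subtlety worth flagging is the explicit passage from $\SL_n\BC$ to $\GL_n\BC$, needed so that Lemma \ref{big-center} applies verbatim.
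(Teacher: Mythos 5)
Your argument is correct and follows essentially the same route as the paper: use (A1) to see that $\phi_t(\SL_n\BC)$ lies in $Z_{\GL_n\BC}(\phi_t(\zeta))$, use the fact that $\phi_t$ is homotopic to the identity together with Lemma \ref{lem:class} to see that this centralizer carries a non-trivial class in $H_{n^2-1}(\GL_n\BC)$, apply Lemma \ref{big-center}, and finish by discreteness of the center. The only cosmetic difference is that the paper phrases the homological step via $[\phi_t(\SU_n)]=[\SU_n]$ rather than via the composition $\SL_n\BC\to Z_{\GL_n\BC}(\phi_t(\zeta))\hookrightarrow\GL_n\BC$.
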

\begin{proof}
Recall that the center of $\SL_n\BC$ is discrete. Therefore, it suffices to show that for each $A\in Z_{\SL_n\BC}(\SL_n\BC)$ we have $\phi_t(A)\in Z_{\SL_n\BC}(\SL_n\BC)$ for all $t$. For any such $A$ and $t$ we have
\begin{equation}\label{eq-center1}
\phi_t(\SU_n)\subset \phi_t(\SL_n\BC)=\phi_t( Z_{\SL_n\BC}(A))\subset Z_{\SL_n\BC}(\phi_t(A))
\end{equation}
where the final inclusion holds because $\phi_t$ preserves commutativity. Since $\phi_t$ is a homotopy starting with the identity, it follows from Lemma \ref{lem:class}, that
\begin{equation}\label{eq-center2}
[\phi_t(\SU_n)]=[\SU_n]\neq 0\in H_{n^2-1}(\GL_n\BC)
\end{equation}
From \eqref{eq-center1} and \eqref{eq-center2} we deduce that $Z_{\SL_n\BC}(\phi_t(A))$ carries a cycle representing a non-trivial class in $H_{n^2-1}(\GL_n\BC)$. Lemma \ref{big-center} applies and shows that $\phi_t(A)$ is central in $\GL_n\BC$, and hence in $\SL_n\BC$, as we needed to prove. 
\end{proof}

Observe that the final map $\phi_1:\SL_n\BC\to\SU_n$ of the homotopy $\phi_t$ is a homotopy equivalence preserving commutativity and with the property that 
$$\phi_1(Z_{\SL_n\BC}(\SL_n\BC))=Z_{\SL_n\BC}(\SL_n\BC)=Z_{\SU_n}(\SU_n)$$
The content of Proposition \ref{no-homotopy} is that such a homotopy equivalence cannot exist:

\begin{named}{Proposition \ref{no-homotopy}}
If $n\ge 8$, then there is no homotopy equivalence 
$$\phi:\SL_n\BC\to\SU_n$$ 
preserving commutativity and with $\phi(Z_{\SL_n\BC}(\SL_n\BC))=Z_{\SU_n}(\SU_n)$.
\end{named}

In particular, Theorem \ref{no-retract} follows once we have proved Proposition \ref{no-homotopy}; the remaining of this paper is devoted to its proof.

\section{The map $\Phi$}
In this section we associate to any homotopy equivalence $\phi$ as in Proposition \ref{no-homotopy} and to any hermitian form $Q\in\Her(\BC^n)$ a projective transformation of the projective space $P\BC^n$ of $\BC^n$. Recall that $\SU(Q)$ is the subgroup of $\SL_n\BC$ preserving the hermitian form $Q$. As mentioned above, we denote by $Q_0$ the standard hermitian form of $\BC^n$ and hence have that $\SU_n=\SU(Q_0)$.

Before going any further we need some notation. Given a hermitian form $Q\in\Her(\BC^n)$ and a 1-dimensional linear subspace $L\in P\BC^n$ of $\BC^n$ let $\sigma_Q^L\subset\SU(Q)$ be the subgroup of $\SU(Q)$ consisting of elements diagonalized by the direct sum decomposition $\BC^n=L\oplus L^{\perp_Q}$. In more concrete, but also more obscure, terms
$$\sigma_Q^L=\{A\in\SU(Q)\vert\exists\lambda,\mu\in\BC\ \hbox{with}\ Av=\lambda\pi_L(v)+\mu\pi_{L^{\perp_Q}}(v)\ \forall v\in\BC^n\}$$
where
$$\pi_L:\BC^n\to L\ \ \hbox{and}\ \ \pi_{L^{\perp_Q}}:\BC^n\to L^{\perp_Q}$$
are the $Q$-orthogonal projections onto $L$ and onto its $Q$-orthogonal complement $L^{\perp_Q}$. 

\begin{lem}\label{big-center2}
Assume that $n\ge 3$. If a commuting set $X\subset\SU_n$ is not contained in the center $Z_{\SU_n}(\SU_n)$ and has the property that $Z_{\SU_n}(X)$ carries a non-trivial class in $H_{(n-1)^2-1}(\SU_n)$, then there is a unique $L\in P\BC^n$ with $X\subset\sigma_{Q_0}^L$. Furthermore, we have:
$$Z_{\SU_n}(X)=Z_{\SU_n}(\sigma_{Q_0}^L)=\Stab_{\SU_n}(L)$$
\end{lem}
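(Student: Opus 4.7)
The plan is to imitate the proof of Lemma \ref{big-center}, but inside $\SU_n$ and with the weaker centralizer hypothesis, which should force the diagonalizing decomposition of $X$ to split $\BC^n$ into exactly two blocks of sizes $1$ and $n-1$ --- the one-dimensional block being the desired line $L$. First I would apply Lemma \ref{dsd} to the commuting set $X$ (whose elements are diagonalizable since they lie in $\SU_n$) to obtain the diagonalizing decomposition $\BC^n = V_1\oplus\cdots\oplus V_r$. Because eigenspaces of unitary operators are pairwise $Q_0$-orthogonal, so are the factors $V_i$, and hence
$$Z_{\SU_n}(X)\;=\;\{A\in\SU_n \mid AV_i=V_i\ \forall i\}\;\cong\;S(\U_{d_1}\times\cdots\times\U_{d_r}),$$
a closed manifold of real dimension $\sum_i d_i^2 - 1$, where $d_i=\dim V_i$.

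Since $Z_{\SU_n}(X)$ carries a non-trivial class in $H_{(n-1)^2-1}(\SU_n)$, its own homology in that degree must be non-zero, whence $\sum_i d_i^2 \geq (n-1)^2$. The case $r=1$ is excluded, for then every element of $X$ would be scalar and, being unitary, would lie in $Z_{\SU_n}(\SU_n)$, contrary to hypothesis; so $r\geq 2$. Setting $m=\max_i d_i$, the crude bound $\sum_i d_i^2 \leq m\sum_i d_i = mn$ gives $m \geq (n-1)^2/n > n-2$, hence $m\geq n-1$; combined with $\sum_i d_i = n$ and $r\geq 2$, this pins down $r=2$ and $\{d_1,d_2\}=\{1,n-1\}$.

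Let $L$ be the one-dimensional factor, so that $\BC^n = L\oplus L^{\perp_{Q_0}}$; every element of $X$ is scalar on each factor, so $X\subset\sigma_{Q_0}^L$. For uniqueness, any other line $L'$ with $X\subset\sigma_{Q_0}^{L'}$ would yield a second decomposition $\BC^n=L'\oplus L'^{\perp_{Q_0}}$ on which $X$ acts diagonally, forcing each joint eigenspace $V_i$ to lie in one of $L',L'^{\perp_{Q_0}}$; since $n\geq 3$ gives $\dim V_2=n-1\neq 1$, the only possibility is $L'=V_1=L$. Finally, Lemma \ref{dsd} together with the $Q_0$-orthogonality of $V_1,V_2$ yields $Z_{\SU_n}(X)=\Stab_{\SU_n}(L)$; and $Z_{\SU_n}(\sigma_{Q_0}^L)=\Stab_{\SU_n}(L)$ because, on one hand, $\sigma_{Q_0}^L$ contains non-central elements whose eigenspaces are exactly $L$ and $L^{\perp_{Q_0}}$ (so any commuting element must stabilize $L$), while on the other hand every $A\in\Stab_{\SU_n}(L)$ block-diagonalizes with respect to $L\oplus L^{\perp_{Q_0}}$ and therefore commutes with every scalar-on-each-block element of $\sigma_{Q_0}^L$.

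The only substantive obstacle is the small pigeonhole step pinning down $\{d_1,d_2\}=\{1,n-1\}$; everything else is formal, following Lemma \ref{dsd} and the orthogonality of unitary eigenspaces. The jump from one block (as in Lemma \ref{big-center}) to two is precisely what delivers the geometric output of this lemma: the canonical line $L\in P\BC^n$ attached to $X$.
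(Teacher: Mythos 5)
Your proposal is correct and takes essentially the same route as the paper: Lemma \ref{dsd} plus orthogonality of unitary eigenspaces identifies $Z_{\SU_n}(X)$ with a block subgroup of dimension $\sum_i d_i^2-1$, and the homological hypothesis forces the block sizes to be $\{1,n-1\}$, yielding $L$ and the displayed equalities. The only (harmless) difference is at the end: the paper runs a second homological argument, projecting $\Stab_{\SU_n}(L^{\perp})$ onto a group isomorphic to $\U_{n-1}$ and invoking Lemma \ref{big-center} to see that $X$ acts by homotheties on $L^{\perp}$, whereas you observe that once the count pins the diagonalizing decomposition down to exactly the two factors $L$ and $L^{\perp}$, Lemma \ref{dsd} already gives this, so your version is a slight streamlining rather than a different proof.
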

\begin{proof}
Recall that every element in $\SU_n$ is diagonalizable; hence Lemma \ref{dsd} applies to $X$. Let $\BC^n=\oplus_i V_i$ be the diagonalizing direct sum decomposition of $X$ ordered in such a way that $\dim V_i\le\dim V_{i+1}$; notice that the assumption that $X$ is not contained in the center of $\SU_n$ amounts to $r\ge 2$. 

By Lemma \ref{dsd} and the observations after that lemma, the factors $V_i$ are pairwise orthogonal to each other, and
$$Z_{\SU_n}(X)=\{A\in\SU_n\vert AV_i=V_i\ \forall i\}$$
If we set $d_i=\dim V_i$, it follows that $Z_{\SU_n}(X)$ is conjugated within $\SU_n$ to the subgroup $(\U_{d_1}\times\dots\times\U_{d_r})\cap\SU_n$. In particular, $Z_{\SU_n}(X)$ has dimension $d_1^2+\dots+d_r^2-1$. Our homological assumption implies then that
$$d_1^2+\dots+d_r^2-1\ge n^2-1$$
Since $d_1\le\dots\le d_r$ are positive integers with $\sum d_i=n\ge 3$ and since $r\ge 2$, it follows easily that the only possibility is $d_1=1$ and $d_2=n-1$. 

So far we have proved that $Z_{\SU_n}(X)$ fixes the 1-dimensional space $L=V_1$ and hence also its orthogonal complement $L^\perp=V_2$. Notice that, since $X$ is a commuting subset, we have that $X\subset Z_{\SU_n}(X)$. In order to conclude the proof of Lemma \ref{big-center2} we have to show that every $A\in X$ acts on $L^\perp$ as a homothety. In order to see that this is the case, denote by $G$ the subgroup of $\GL(L^\perp)$ preserving the restriction of $Q_0$ to $L^\perp$ and notice that the homomorphism
$$\Stab_{\SU_n}(L^\perp)\to G$$
is in fact an isomorphism. 

Recalling that $Z_{\SU_n}(X)\subset\Stab_{\SU_n}(L^\perp)$ carries, by assumption, a non-trivial class in $H_{(n-1)^2-1}(\SU_n)$, we deduce that the image of $Z_{\SU_n}(X)$ in $G$ carries a non-trivial class in $H_{(n-1)^2-1}(G)$ as well. Since $G$ is isomorphic to $\U_{n-1}$, it follows from Lemma \ref{big-center} that the image of $X$ in $G$ is contained in the center of $G$. In other words, $X$ acts on $L^\perp$ by homotheties, as we wanted to show. 

This concludes the proof of the existence part of Lemma \ref{big-center2}. The uniqueness of $L=V_1$ follows for example from the uniqueness of the diagonalizing direct sum decomposition $\BC^n=\oplus_i V_i$ associated to $X$.

In order to prove the final claim, observe that 
$$\Stab_{\SU_n}(L)=Z_{\SU_n}(\sigma_{Q_0}^L)\subset Z_{\SU_n}(X)$$
because $X\subset\sigma_{Q_0}(L)$. The opposite inclusion follows from the uniqueness of $L$.
\end{proof}

We are now ready to prove the following key fact:

\begin{lem}\label{key1}
If $\phi:\SL_n\BC\to\SU_n$ is a homotopy equivalence as in the statement of Proposition \ref{no-homotopy}, then there is a continuous map
$$\Phi:\Her(\BC^n)\times P\BC^n\to P\BC^n,\ \ (Q,L)\mapsto\Phi_Q(L)$$
such that for $(Q,L)\in\Her(\BC^n)\times P\BC^n$ and $L'\in P\BC^n$ we have
\begin{equation}\label{eq-defi}
L'=\Phi_Q(L)\ \hbox{if and only if}\ \phi(\sigma_Q^L)\subset\sigma_{Q_0}^{L'}
\end{equation}
\end{lem}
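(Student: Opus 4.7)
The plan is to define $\Phi_Q(L)$ as the unique projective line $L' \in P\BC^n$ furnished by Lemma \ref{big-center2} applied to the commuting set $X = \phi(\sigma_Q^L) \subset \SU_n$. Since $\sigma_Q^L$ is an abelian circle subgroup of $\SL_n\BC$ and $\phi$ preserves commutativity by (A1), $X$ is automatically commuting. To invoke Lemma \ref{big-center2} I must verify two further hypotheses: that $Z_{\SU_n}(X)$ carries a non-trivial class in $H_{(n-1)^2-1}(\SU_n)$, and that $X$ is not contained in $Z_{\SU_n}(\SU_n)$.

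For the homological hypothesis, I first compute $Z_{\SL_n\BC}(\sigma_Q^L)$ explicitly. Since every element of $\sigma_Q^L$ acts as a pair of scalars on the decomposition $\BC^n = L \oplus L^{\perp_Q}$, its $\SL_n\BC$-centralizer coincides with the block-diagonal subgroup $\Stab_{\SL_n\BC}(L) \cap \Stab_{\SL_n\BC}(L^{\perp_Q})$, which is conjugate within $\SL_n\BC$ to $\GL_{n-1}\BC$. In particular it contains a natural copy of $\SU_{n-1}$ acting on $L^{\perp_Q}$ and fixing $L$, and the surjectivity of $H^*(\SU_n) \to H^*(\SU_{n-1})$ recorded in Section 2.2 shows, exactly as in Lemma \ref{lem:class}, that the image of $[\SU_{n-1}]$ is non-trivial in $H_{(n-1)^2-1}(\SL_n\BC)$. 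Commutativity preservation yields $\phi(Z_{\SL_n\BC}(\sigma_Q^L)) \subset Z_{\SU_n}(X)$, and because $\phi_*$ is an isomorphism on homology, the push-forward $\phi_*[\SU_{n-1}]$ is a non-zero element of $H_{(n-1)^2-1}(\SU_n)$ supported inside $Z_{\SU_n}(X)$, as required.

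The non-centrality of $X$ is the main obstacle. Since $\sigma_Q^L$ is connected and $Z_{\SU_n}$ is discrete, the containment $\phi(\sigma_Q^L) \subset Z_{\SU_n}$ would force $\phi|_{\sigma_Q^L}$ to be the constant map to a single central element. Ruling this out requires a delicate argument exploiting both the homotopy equivalence property of $\phi$ and the hypothesis $\phi(Z_{\SL_n\BC}(\SL_n\BC)) = Z_{\SU_n}(\SU_n)$; I expect it to proceed by a homological comparison showing that if $\phi$ collapsed $\sigma_Q^L$ to a single central point, then a suitable non-trivial class in $H_*(\SL_n\BC)$ supported in or around $\sigma_Q^L \cup Z_{\SL_n\BC}(\sigma_Q^L)$ would be forced to vanish in $H_*(\SU_n)$, contradicting the fact that $\phi_*$ is an isomorphism. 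This is the technical heart of the lemma.

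Once both hypotheses are verified, Lemma \ref{big-center2} supplies the unique $L' \in P\BC^n$ with $\phi(\sigma_Q^L) \subset \sigma_{Q_0}^{L'}$, giving both existence and uniqueness expressed in \eqref{eq-defi}; I set $\Phi_Q(L) := L'$. Continuity of $\Phi$ then follows by a soft closed-graph argument: the assignment $(Q, L) \mapsto \sigma_Q^L$ is continuous in the Hausdorff topology on compact subsets of $\SL_n\BC$, the map $\phi$ is continuous, and the defining inclusion $\phi(\sigma_Q^L) \subset \sigma_{Q_0}^{L'}$ is closed in $(Q, L, L')$; combined with the uniqueness just established, this forces the graph of $\Phi$ to be closed, and hence $\Phi$ to be continuous.
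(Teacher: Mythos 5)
There is a genuine gap at exactly the step you flag as ``the technical heart of the lemma'': you never prove that $X=\phi(\sigma_Q^L)$ is not contained in the center of $\SU_n$, and the homological strategy you sketch for it would not work. Since $\SL_n\BC$ is simply connected and $\sigma_Q^L$ is a circle, $\sigma_Q^L$ is nullhomotopic in $\SL_n\BC$; a map that collapsed $\sigma_Q^L$ to a single central point would therefore kill no homology class ``supported in or around $\sigma_Q^L$'', and the class carried by $Z_{\SL_n\BC}(\sigma_Q^L)$ survives under $\phi$ regardless of what $\phi$ does on $\sigma_Q^L$ itself. So no contradiction with $\phi_*$ being an isomorphism can be extracted along those lines, and as written your proof of the lemma is incomplete.

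The paper's argument for this step is in fact elementary and uses the center hypothesis directly, not homology. The subgroup $\sigma_Q^L$ contains the center $Z_{\SL_n\BC}(\SL_n\BC)$ (every homothety acts as a scalar on both $L$ and $L^{\perp_Q}$). By the hypothesis $\phi\bigl(Z_{\SL_n\BC}(\SL_n\BC)\bigr)=Z_{\SU_n}(\SU_n)$, the connected set $\phi(\sigma_Q^L)$ contains all $n\ge 2$ elements of the center of $\SU_n$. If $\phi(\sigma_Q^L)$ were contained in the center, which is discrete, connectedness would force it to be a single point, contradicting the fact that it contains at least two central elements. This is where the assumption on the center in Proposition \ref{no-homotopy} is genuinely used; your proposal invokes that hypothesis only vaguely. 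The remainder of your argument (the copy of $\SU_{n-1}$ in $Z_{\SL_n\BC}(\sigma_Q^L)$, Lemma \ref{lem:class}, commutativity preservation to place the class inside $Z_{\SU_n}(X)$, then Lemma \ref{big-center2} and a closed-graph argument for continuity, the latter being legitimate since $P\BC^n$ is compact) coincides with, or is a reasonable elaboration of, the paper's proof.
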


Essentially, the statement of Lemma \ref{key1} is that the homotopy equivalence $\phi$ maps subgroups of $\SL_n\BC$ of the form $\sigma_Q^L$ to subgroups of $\SU_n$ of the same form.

\begin{proof}
Given $(Q,L)\in\Her(\BC^n)\times P\BC^n$ notice the the abelian group $\sigma_Q^L\subset\SU(Q)$ is homeomorphic to $\BS^1$ and contains the center of $\SL_n\BC$. In particular, by our assumptions on the homotopy equivalence $\phi$, $\phi(\sigma_Q^L)$ is a connected commuting set containing the center of $\SL_n\BC$. Since the center of $\SL_n\BC$ is disconnected, it follows that 
$$\phi(\sigma_Q^L)\nsubseteq Z_{\SL_n\BC}(\SL_n\BC)$$
Notice now that $\SU_{n-1}$ can be conjugated into the centralizer $Z_{\SL_n\BC}(\sigma_Q^L)$ of $\sigma_Q^L$ and that 
$$\phi(Z_{\SL_n\BC}(\sigma_Q^L))\subset Z_{\SU_n}(\phi(\sigma_{Q_0}^L))$$ 
because $\phi$ preserves commutativity. In particular, it follows from Lemma \ref{lem:class} and the assumption that $\phi$ is a homotopy equivalence that $Z_{\SU_n}(\phi(\sigma_Q^L))$ carries a non-trivial class in $H_{(n-1)^2-1}(\SU_n)$. From Lemma \ref{big-center2} we obtain therefore that there is a unique point in $P\BC^n$, call it $\Phi_Q(L)$, with
$$\phi(\sigma_Q^L)\subset\sigma_{Q_0}^{\Phi_Q(L)}$$
We have established the existence of a map $\Phi:\Her(\BC^n)\times P\BC^n\to P\BC^n$ satisfying 
\eqref{eq-defi}. The continuity of $\Phi$ follows easily from the continuity of $\phi$ and the fact that the former is well-defined.
\end{proof}

At this point we observe a fact that will be used repeatedly below. Given $(Q,L)\in\Her(\BC^n)\times P\BC^n$ recall that when defining $\Phi_Q(L)$ in the proof of Lemma \ref{key1}, we applied Lemma \ref{big-center2} to $X=\phi(\sigma_Q^L)$. The last claim of Lemma \ref{big-center2} implies thus that
$$Z_{\SU_n}(\phi(\sigma_Q^L))=Z_{\SU_n}(\sigma_{Q_0}^{\Phi_Q(L)})$$
Observing that $\Stab_{\SU(Q)}(L)=Z_{\SU(Q)}(\sigma_Q^L)$, we deduce from the commutativity preserving property of $\phi$, that
\begin{align*}
\phi(\Stab_{\SU(Q)}(L))&=\phi(Z_{\SU(Q)}(\sigma_Q^L))\subset Z_{\SU(Q)}(\phi((\sigma_Q^L))\\
&=Z_{\SU_n}(\sigma_{Q_0}^{\Phi_Q(L)})=\Stab_{\SU_n}(\Phi_Q(L))
\end{align*}
This is remarkable enough to be recorded as a lemma:

\begin{lem}\label{key3}
With the same notation as in Lemma \ref{key1} we have 
$$\phi(\Stab_{\SU(Q)}(L))\subset\Stab_{\SU_n}(\Phi_Q(L))$$
for all $(Q,L)\in\Her(\BC^n)\times P\BC^n$.\qed
\end{lem}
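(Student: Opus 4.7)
The plan is to realize both stabilizers as centralizers of the $1$-parameter abelian subgroups $\sigma_Q^L$ and $\sigma_{Q_0}^{\Phi_Q(L)}$, and then chain three short observations. First I would verify the identity
$$\Stab_{\SU(Q)}(L) \;=\; Z_{\SU(Q)}(\sigma_Q^L),$$
which holds because any $A \in \SU(Q)$ fixing $L$ automatically fixes its $Q$-orthogonal complement $L^{\perp_Q}$, and therefore commutes with every element of $\sigma_Q^L$ (each of which acts as a scalar on each of these two summands); conversely, any matrix centralizing $\sigma_Q^L$ must preserve the eigenspaces of a non-central element of $\sigma_Q^L$, and these are precisely $L$ and $L^{\perp_Q}$. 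The analogous identity applied to the pair $(Q_0,\Phi_Q(L))$ yields $\Stab_{\SU_n}(\Phi_Q(L)) = Z_{\SU_n}(\sigma_{Q_0}^{\Phi_Q(L)})$.

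Second, because $\phi$ preserves commutativity it carries the centralizer of any subset into the centralizer of its image, so in particular
$$\phi\bigl(Z_{\SU(Q)}(\sigma_Q^L)\bigr) \;\subset\; Z_{\SU_n}\bigl(\phi(\sigma_Q^L)\bigr).$$
Third, to identify this last centralizer with that of $\sigma_{Q_0}^{\Phi_Q(L)}$, I would invoke the concluding sentence of Lemma \ref{big-center2} applied to the commuting set $X = \phi(\sigma_Q^L)$. The proof of Lemma \ref{key1} already checked that this $X$ satisfies the hypotheses of Lemma \ref{big-center2} (it is a non-central commuting set whose centralizer carries a non-trivial class in $H_{(n-1)^2-1}(\SU_n)$), and the unique $L' \in P\BC^n$ produced there is by construction $\Phi_Q(L)$. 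Hence Lemma \ref{big-center2} gives the equality
$$Z_{\SU_n}\bigl(\phi(\sigma_Q^L)\bigr) \;=\; Z_{\SU_n}\bigl(\sigma_{Q_0}^{\Phi_Q(L)}\bigr),$$
and concatenating this with the inclusion and the two stabilizer-centralizer identities above finishes the proof.

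The only step that requires any care is the third one: the bare inclusion $\phi(\sigma_Q^L) \subset \sigma_{Q_0}^{\Phi_Q(L)}$ supplied by Lemma \ref{key1} would, on its own, only produce the \emph{reverse} containment of centralizers, which is useless here. It is precisely the uniqueness part of Lemma \ref{big-center2} — which forces the centralizer of a commuting set with large centralizer to depend only on the unique line $L$ it distinguishes — that upgrades this to an equality. Everything else is a diagram chase through definitions.
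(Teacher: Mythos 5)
Your proposal is correct and follows essentially the same route as the paper: identify both stabilizers with centralizers of $\sigma_Q^L$ and $\sigma_{Q_0}^{\Phi_Q(L)}$, use commutativity-preservation for the inclusion, and invoke the final claim of Lemma \ref{big-center2} applied to $X=\phi(\sigma_Q^L)$ (as in the definition of $\Phi_Q(L)$) to upgrade to the needed equality of centralizers. Your closing remark that the bare inclusion from Lemma \ref{key1} would only give the reverse containment, so that the last part of Lemma \ref{big-center2} is genuinely needed, is exactly the point the paper's argument relies on.
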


Our next goal is to prove that the map $\Phi_Q$ provided by Lemma \ref{key1} is collinear. Recall that a map between projective spaces is {\em collinear} if whenever three points in the domain are contained in some projective line, then their images are also contained in a projective line. 

\begin{lem}\label{key2}
The map $\Phi_Q:P\BC^n\to P\BC^n$ provided by Lemma \ref{key1} is collinear for all $Q\in\Her(\BC^n)$.
\end{lem}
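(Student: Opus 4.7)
My plan is to parallel Lemma \ref{big-center2}: there the homological size of the centralizer of $\sigma_{Q_0}^L$ singled out one line $L\in P\BC^n$, and here the size of the simultaneous stabilizer of three lines will detect whether those lines lie on a common projective line. Fix $L_1,L_2,L_3\in P\BC^n$ lying in a $2$-dimensional subspace $W\subset\BC^n$ and set $L_i':=\Phi_Q(L_i)$. I assume throughout that both the $L_i$ and the $L_i'$ are pairwise distinct, since otherwise collinearity of the $L_i'$ is immediate.

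First I describe $H:=\bigcap_{i=1}^3\Stab_{\SU(Q)}(L_i)$. Any element of $H$ preserves three pairwise distinct lines of the $2$-dimensional space $W$ and therefore acts on $W$ by a scalar; consequently $H$ is conjugate inside $\GL_n\BC$ to the block subgroup $\{(\lambda,B)\in\U_1\times\U_{n-2}:\lambda^2\det B=1\}$, and in particular contains a copy of $\SU_{n-2}$ acting as $\SU(Q|_{W^{\perp_Q}})$ on $W^{\perp_Q}$ and trivially on $W$. By Lemma \ref{lem:class} the fundamental class $[\SU_{n-2}]$ is non-trivial in $H_{(n-2)^2-1}(\GL_n\BC)\cong H_{(n-2)^2-1}(\SL_n\BC)$.

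Three applications of Lemma \ref{key3} give $\phi(H)\subset H':=\bigcap_{i=1}^3\Stab_{\SU_n}(L_i')$. Because $\phi:\SL_n\BC\to\SU_n$ is a homotopy equivalence it induces an isomorphism on homology, so $\phi_*[\SU_{n-2}]$ is a non-trivial class in $H_{(n-2)^2-1}(\SU_n)$ represented, by construction, by a cycle sitting inside the compact subgroup $H'$.

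It remains to contradict non-collinearity of the $L_i'$. If the $L_i'$ spanned a $3$-plane $V'\subset\BC^n$, any $A\in H'$ would preserve both $V'$ and $V'^\perp$ and would act diagonally on $V'$ in the basis obtained by picking non-zero $v_i\in L_i'$; unitarity of $A$ with respect to $Q_0$ forces the eigenvalues $\lambda_i$ to coincide whenever $L_i'$ and $L_j'$ are not $Q_0$-orthogonal, and the block on $V'^\perp$ lies in $\U_{n-3}$ subject to a single determinant relation. Maximising over all orthogonality patterns -- the extremal case being three mutually $Q_0$-orthogonal lines -- yields $\dim H'\le (n-3)^2+2$, and the elementary inequality
\[
(n-3)^2+2<(n-2)^2-1\ \Longleftrightarrow\ n>4
\]
forces $H_{(n-2)^2-1}(H')=0$, contradicting the non-trivial class of the preceding paragraph. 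I expect this final dimension estimate to be the only delicate step: one has to rule out every orthogonality configuration on three non-collinear lines, but even the worst case leaves a margin of $2n-8$, comfortably within the standing hypothesis $n\ge 8$.
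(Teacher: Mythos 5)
Your proof is correct and follows the same strategy as the paper: assume the three images span a $3$-plane, push a conjugate of $\SU_{n-2}$ (which fixes the $2$-plane $\Span(L_1,L_2,L_3)$ pointwise) through $\phi$ using Lemma \ref{key3}, note via Lemma \ref{lem:class} and the homotopy equivalence that its image carries a non-trivial class in $H_{(n-2)^2-1}(\SU_n)$, and contradict this with a dimension bound on the subgroup of $\SU_n$ that must contain that image. The one genuine difference is the final estimate: the paper only uses that $\phi(G)$ lands in $\Stab_{\SU_n}(F)$, $F=\Span(\Phi_Q(L_1),\Phi_Q(L_2),\Phi_Q(L_3))$, whose dimension $n^2-6n+17$ yields a contradiction exactly when $n\ge 8$; you instead bound the smaller group $H'=\bigcap_i\Stab_{\SU_n}(\Phi_Q(L_i))$, and your case analysis of the orthogonality pattern (eigenvalues on non-orthogonal eigenlines of a unitary map must agree) correctly gives $\dim H'\le (n-3)^2+2$, so your version of the lemma already works for $n\ge 5$ -- a genuine sharpening of the numerics, at the cost of a slightly longer argument. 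Two small points to tidy: the claim $H_{(n-2)^2-1}(\GL_n\BC)\cong H_{(n-2)^2-1}(\SL_n\BC)$ is not literally true (think of $\U_n$ versus $S^1\times\SU_n$); what you actually need is that $[\SU_{n-2}]$ is non-zero in $H_{(n-2)^2-1}(\SL_n\BC)$, which does follow from Lemma \ref{lem:class} because the standard inclusion $\SU_{n-2}\hookrightarrow\GL_n\BC$ factors through $\SL_n\BC$, and because the copy of $\SU_{n-2}$ inside $\SU(Q)$ is conjugate to the standard one by an element of the connected group $\SL_n\BC$, so conjugation does not change the homology class. With those remarks your argument is complete.
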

\begin{proof}
Seeking a contradiction, suppose $L_1,L_2$ and $L_3$ in $P\BC^n$ are contained in some projective line but that their images $\Phi_Q(L_1),\Phi_Q(L_2)$ and $\Phi_Q(L_3)$ are not. In other words, if we set 
$$E=\Span(L_1,L_2,L_3)\ \ \hbox{and}\ \ F=\Span(\Phi_Q(L_1),\Phi_Q(L_2),\Phi_Q(L_3))$$
we have $\dim E=2$ and $\dim F=3$. 

Let $G$ be the subgroup of $\SU(Q)$ fixing $E$ pointwise and notice that $G$ stabilizes $E^{\perp_Q}$. Since $G$ fixes $L_i$, we have $\phi(G)\subset\Stab_{\SU(Q)}(\Phi_Q(L_i))$ by Lemma \ref{key3}; in particular,
$$\phi(G)\subset\Stab_{\SU_n}(F)$$
The group $\Stab_{\SU_n}(F)$ is conjugated to the subgroup $(\U_3\times\U_{n-3})\cap\SU_n$ within $\SU_n$. It follows that $\Stab_{\SU_n}(F)$ has dimension $n^2-6n+17$ and hence that
$$H_d(\Stab_{\SU_n}(F))=0\ \ \hbox{for all}\ d>n^2-6n+17$$
On the other hand, $G$ is conjugated in $\SL_n\BC$ to $\SU_{n-2}$. Since $\phi$ is a homotopy equivalence we have by Lemma \ref{lem:class} that $\phi(G)\subset\Stab_{\SU_n}(F)$ represents a non-trivial class in $H_{(n-2)^2-1}(\SU_n)$. This implies that
$$(n-2)^2-1\le n^2-6n+17$$
but this is a contradiction by our assumption that $n\ge 8$. This concludes the proof of Lemma \ref{key2}.
\end{proof}

At this point we want to remind the reader of the following version of the Fundamental Theorem of Projective Geometry:

\begin{named}{Fundamental Theorem of Projective Geometry}
For $n\ge 3$, suppose that $f:P\BC^n\to P\BC^n$ is continuous, colinear, and that $f(P\BC^n)$ contains $n$ points in general position. Then there is a $\BR$-linear map $F:\BC^n\to\BC^n$ with $f(L)=F(L)$ for all $L\in P\BC^n$; moreover, $\det(F)=1$ and $F$ is either $\BC$-linear or $\BC$-antilinear.
\end{named}

\begin{bem}
The author has not found this concrete version of the Fundamental Theorem of Projective Geometry in the literature but assumes it to be well-known. In any case, it follows from Theorem 3.1 in the beautiful Ph.D.-thesis \cite{McCallum} of Rupert McCallum (see also \cite{Lost,McCallum2}) and from the classical result of Kolmogoroff \cite{Kolmogoroff} analyzing the continuous bijective colinearities of projective space.
\end{bem}

We are now ready to prove the main result of this section:

\begin{prop}\label{key}
If $\phi:\SL_n\BC\to\SU_n$ is a homotopy equivalence as in the statement of Proposition \ref{no-homotopy}, then there is a continuous map
$$\Phi:\Her(\BC^n)\times P\BC^n\to P\BC^n,\ \ (Q,L)\mapsto\Phi_Q(L)$$
such that for every $Q\in\Her(\BC^n)$ the following holds:
\begin{enumerate}
\item There is an either linear or antilinear isometry
$$\CL_Q:(\BC^n,Q)\to(\BC^n,Q_0)$$
with $\Phi_Q(L)=\CL_Q(L)$ for all $L\in P\BC^n$.
\item If $A$ is diagonalized by a $Q$-orthogonal basis $(v_1,\dots,v_n)$ then $\phi(A)$ is diagonalized by $(\CL_Q(v_1),\dots,\CL_Q(v_n))$.
\end{enumerate}
\end{prop}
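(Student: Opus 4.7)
The existence and continuity of $\Phi$ come from Lemma \ref{key1}, and the colinearity of each $\Phi_Q$ from Lemma \ref{key2}. The plan is to apply the stated Fundamental Theorem of Projective Geometry to each $\Phi_Q$, producing a $\BC$-linear or $\BC$-antilinear $F_Q$ of determinant one with $\Phi_Q(L)=F_Q(L)$, and then to upgrade $F_Q$ to the isometry $\CL_Q$ using an orthogonality argument; conclusion (2) will then follow almost immediately from the construction of $\Phi_Q$ via the centralizers of the circles $\sigma_Q^{L_i}$.

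To apply the Fundamental Theorem I need $n$ points in general position in the image of $\Phi_Q$, and the natural candidates are the images of a $Q$-orthogonal basis $L_1,\dots,L_n$. First I would establish that $\Phi_Q$ \emph{preserves orthogonality}: if $L_1\perp_Q L_2$ then $\sigma_Q^{L_1}$ and $\sigma_Q^{L_2}$ commute pointwise, since they are simultaneously diagonal in the decomposition $L_1\oplus L_2\oplus (L_1\oplus L_2)^{\perp_Q}$. Hence $\phi(\sigma_Q^{L_1})$ and $\phi(\sigma_Q^{L_2})$ commute pointwise. Because $\phi$ fixes the center and each $\sigma_Q^{L_i}$ contains all $n$ central matrices, $\phi(\sigma_Q^{L_i})$ is connected but not contained in the discrete center of $\SU_n$; it therefore contains a non-central element $B_i\in\sigma_{Q_0}^{\Phi_Q(L_i)}$, whose eigenspace decomposition is exactly $\Phi_Q(L_i)\oplus\Phi_Q(L_i)^{\perp_{Q_0}}$. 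The commutativity of $B_1$ and $B_2$ then forces $\Phi_Q(L_1)=\Phi_Q(L_2)$ or $\Phi_Q(L_1)\perp_{Q_0}\Phi_Q(L_2)$. Second, I must rule out coincidences $\Phi_Q(L_i)=\Phi_Q(L_j)$ among members of a $Q$-orthogonal basis; any such coincidence would force the image under $\phi$ of the maximal torus $T\subset\SU(Q)$ diagonalized in this basis into the stabilizer in $\SU_n$ of at most $n-1$ mutually $Q_0$-orthogonal lines. A homology and dimension estimate modelled on Lemmas \ref{big-center} and \ref{big-center2}, combining Lemma \ref{lem:class} applied to an appropriate $\SU_k$ with the hypothesis $n\ge 8$, should then contradict $\phi$ being a homotopy equivalence.

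Granted $n$ general-position points in the image, the Fundamental Theorem gives $F_Q$ as described. Orthogonality preservation says $F_Q$ sends every $Q$-orthogonal basis to a $Q_0$-orthogonal basis; equivalently, the pullback $F_Q^*Q_0$ vanishes on every $Q$-orthogonal pair. A standard argument (consider the $Q$-self-adjoint operator $T$ defined by $Q(T\,\cdot,\cdot)=F_Q^*Q_0$: if $T$ were non-scalar, a $Q$-orthonormal basis not adapted to the eigenspaces of $T$ would fail to be $F_Q^*Q_0$-orthogonal) shows that $F_Q^*Q_0=cQ$ for some $c>0$, so $F_Q$ is a positive scalar multiple of a $\BC$-linear or $\BC$-antilinear isometry. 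The condition $\det F_Q=1$ pins the scalar down up to an $n$-th root of unity, which lies in the center of $\SL_n\BC$ and therefore does not affect the induced projective map; absorbing it yields the desired isometry $\CL_Q$.

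Conclusion (2) then follows formally: if $A$ is diagonalized by a $Q$-orthogonal basis $(v_1,\dots,v_n)$ with $L_i=\BC v_i$, then $A$ commutes with each $\sigma_Q^{L_i}$, so $\phi(A)$ commutes with each $\phi(\sigma_Q^{L_i})$, and in particular with the non-central element of $\sigma_{Q_0}^{\CL_Q(L_i)}$ produced above. This forces $\phi(A)$ to preserve each line $\CL_Q(L_i)$, so $\phi(A)$ is diagonalized by $(\CL_Q(v_1),\dots,\CL_Q(v_n))$. The main obstacle I expect is the non-degeneracy step: ruling out the coincidence $\Phi_Q(L_i)=\Phi_Q(L_j)$ for a $Q$-orthogonal basis is where the hypothesis $n\ge 8$ must be invoked, and where the argument seems to require the most delicate dimension and homology bookkeeping.
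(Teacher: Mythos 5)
Most of your outline coincides with the paper's proof: the construction and continuity come from Lemma \ref{key1}, collinearity from Lemma \ref{key2}, your orthogonality-preservation argument via non-central elements $B_i\in\phi(\sigma_Q^{L_i})\subset\sigma_{Q_0}^{\Phi_Q(L_i)}$ is essentially the paper's step (C1),(C2)$\Rightarrow$(C1'),(C2'), the rescaling argument upgrading $F_Q$ to an isometry is fine (indeed one only needs a positive real rescaling, which does not change the projective map), and your proof of (2) is correct. The genuine gap is the general-position step. You propose to get $n$ points in general position as the images of one fixed $Q$-orthogonal basis and to rule out a coincidence $\Phi_Q(L_i)=\Phi_Q(L_j)$ by a homology/dimension estimate "modelled on Lemmas \ref{big-center} and \ref{big-center2}". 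But the natural estimates give no contradiction, for any $n$. If exactly one coincidence occurs, your torus $T\subset\SU(Q)$ maps into the stabilizer of $n-1$ mutually $Q_0$-orthogonal lines, which is itself a maximal torus of $\SU_n$, of dimension $n-1=\dim T$; moreover the fundamental class of a torus is zero in $H_{n-1}(\SU_n)$ (under the Pontryagin ring map its circle factors land in $H_1(\SU_n)=0$), so Lemma \ref{lem:class}, which concerns subgroups $\SU_k$, is of no use here. If instead you use the largest available $\SU_k$, namely a copy of $\SU_{n-2}$ inside $\Stab_{\SU(Q)}(L_i)\cap\Stab_{\SU(Q)}(L_j)$, its image carries a class in degree $(n-2)^2-1$, while the target $\Stab_{\SU_n}(\Phi_Q(L_i))$ has dimension $(n-1)^2>(n-2)^2-1$: again no contradiction. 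So the step you yourself flag as "the main obstacle" is not delicate bookkeeping; as sketched it fails, and taking $n$ large does not help.

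The paper sidesteps the issue entirely and never proves injectivity of $\Phi_Q$ on an orthogonal basis at this stage. It lets $F\subset\BC^n$ be the minimal subspace containing every line in the image of $\Phi_Q$ (over all $L\in P\BC^n$, not just a basis). By Lemma \ref{key3}, for every $A\in\SU(Q)$ the element $\phi(A)$ stabilizes the line $\Phi_Q(L)$ for $L$ an eigenline of $A$, hence fixes a line inside $F$; since $\phi|_{\SU(Q)}$ is a homotopy equivalence onto the closed manifold $\SU_n$, it is surjective, so every element of $\SU_n$ would have an eigenline inside $F$, which forces $F=\BC^n$ and hence gives $n$ points of the image in general position. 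You should either adopt this argument or supply a genuinely new one for ruling out coincidences; with that step repaired, the rest of your proposal goes through and is essentially the paper's proof.
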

\begin{proof}
Let $\Phi:\Her(\BC^n)\times P\BC^n\to P\BC^n$ be the map provided by Lemma \ref{key1}. By Lemma \ref{key2} we know that for any $Q\in\Her(\BC^n)$ the map
$$P\BC^n\to P\BC^n,\ \ L\mapsto\Phi_Q(L)$$
is colinear. In order to apply the Fundamental Theorem of Projective Geometry we need first to prove that the image of $\Phi_Q$ contains $n$ points in general position. 

In order to see that this is the case let $F\subset\BC^n$ be a minimal linear subspace containing every line in the image of $\Phi_Q$. Given $A\in\SU(Q)$ let $L\in P\BC^n$ be the line generated by some eigenvector of $A$. By Lemma \ref{key3} we have $\phi(A)\in\Stab_{\SU_n}(\Phi_Q(L))$. In other words, we have proved that every element in $\phi(\SU(Q))$ fixes a line contained in $F$. Since the restriction of $\phi$ to $\SU(Q)$ is a homotopy equivalence onto $\SU_n$ we have that $\phi(\SU(Q))=\SU_n$. This implies that $F=\BC^n$, showing that $\Phi_Q(P\BC^n)$ contains $n$ points in general position. 

We can now apply the Fundamental Theorem of Projective Geometry and deduce that for all $Q$ the map $\Phi_Q$ is induced by an either linear or antilinear map 
$$\CL_Q:\BC^n\to\BC^n$$
with $\det(F)=1$. We claim that $\CL_Q$ fulfills (1) and (2).

We start proving (2). Given $A\in\SU(Q)$ diagonalized by $(v_1,\dots,v_n)$ observe first that $(\CL_Q(v_1),\dots,\CL_Q(v_n))$ is a basis of $\BC^n$ because $\CL_Q$ is a, possibly antilinear, isomorphism. It remains to prove that $\CL_Q(v_i)$ is an eigenvector of $\phi(A)$ for each $i$. In order to see that this is the case, let $L_i=\BC v_i$ be the line generated by $v_i$. As above, we obtain
$$\phi(A)\in\Stab_{\SU_n}(\Phi_Q(L_i))=\Stab_{\SU_n}(\CL_Q(L_i))$$
from Lemma \ref{key3}. In other words we have that
$$\phi(A)(\CL_Q(v_i))\in\phi(A)(\CL_Q(L_i))=\CL_Q(L_i)=\BC\CL_Q(v_i)$$
This implies that $\CL_Q(v_i)$ is an eigenvector of $\phi(A)$, as claimed. This concludes the proof of (2). 

It remains to prove that $\CL_Q$ is an isometry of $(\BC^n,Q)$ to $(\BC^n,Q_0)$. We know already that $\det(\CL_Q)=1$. Hence, it suffices to show that $\CL_Q$ maps $Q$-orthogonal lines to $Q_0$-orthogonal lines. Suppose that $L_1,L_2\in P\BC^n$ are $Q$-orthogonal lines; equivalently, $L_1\subset L_2^{\perp_Q}$. This is again equivalent to the conditions 
\begin{itemize}
\item[(C1)] $L_1\neq L_2$, and 
\item[(C2)] $\sigma_Q^{L_1}\subset Z_{\SU(Q)}(\sigma_Q^{L_2})$
\end{itemize}
Since $\CL_Q$ is bijective we have
\begin{itemize}
\item[(C1')] $L_1\neq L_2$.
\end{itemize}
On the other hand we have 
\begin{equation}\label{noideawhat}
\phi(\sigma_Q^{L_1})\subset\phi(Z_{\SU(Q)}(\sigma_Q^{L_2}))=Z_{\SU_n}(\sigma_{Q_0}^{\Phi_Q(L_2)})
\end{equation}
where the inclusion holds because $\phi$ preserves commutativity and the first inclusion holds by the argument used to prove Lemma \ref{key3}. Again by the argument used to prove Lemma \ref{key3}  we have
$$Z_{\SU_n}(\phi(\sigma_Q^{L_1}))=Z_{\SU_n}(\sigma_{Q_0}^{\Phi_Q(L_1)})=Z_{\SU_n}(\sigma_{Q_0}^{\CL_Q(L_1)})$$
This equation and equation \eqref{noideawhat} imply:
\begin{itemize}
\item[(C2')] $\sigma_{Q_0}^{\CL_Q(L_1)}\subset Z_{\SU_n}(\sigma_{Q_0}^{\CL_Q(L_2)})$
\end{itemize}
Conditions (C1') and (C2') are equivalent to the two lines $\CL_Q(L_1)$ and $\CL_Q(L_2)$ being $Q_0$-orthogonal to each other. This proves that $\CL_Q$ is an isometry and hence claim (1).

We have proved Proposition \ref{key}.
\end{proof}

\section{Proof of Proposition \ref{no-homotopy}}
In this section we prove Proposition \ref{no-homotopy}.

\begin{named}{Proposition \ref{no-homotopy}}
If $n\ge 8$, then there is no homotopy equivalence 
$$\phi:\SL_n\BC\to\SU_n$$ 
preserving commutativity and with $\phi(Z_{\SL_n\BC}(\SL_n\BC))=Z_{\SU_n}(\SU_n)$.
\end{named}

We will argue by contradiction, so assume that there is a homotopy equivalence as in Proposition \ref{no-homotopy}. As in the previous section, consider the map
$$\Phi:\Her(\BC^n)\times P\BC^n\to P\BC^n$$
provided by Proposition \ref{key}, or equivalently by Lemma \ref{key1}. Let also 
$$\CL_Q:(\BC^n,Q)\to(\BC^n,Q_0)$$ 
be the isometry provided by Proposition \ref{key} for each $Q\in\Her(\BC^n)$ and recall that $\CL_Q$ is either linear or antilinear. Our first observation is that we may assume that $\CL_Q$ is linear.

\begin{lem}\label{no-anti}
If there is a homotopy equivalence $\phi:\SL_n(\BC)\to\SU_n$ as in Proposition \ref{no-homotopy}, then there is also one such that the map $\CL_Q$ is linear for all $Q\in\Her(\BC^n)$ and such that $\CL_{Q_0}=\Id$.
\end{lem}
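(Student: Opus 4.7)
The plan is to normalize $\phi$ by two successive conjugations. First I will adjust $\phi$ so that every $\CL_Q$ is $\BC$-linear (not antilinear), and then I will conjugate on the target to arrange $\CL_{Q_0}=\Id$. Each modification must again satisfy the hypotheses of Proposition \ref{no-homotopy}, so throughout we must track that commutativity is preserved, that the center is still mapped to itself, and that the homotopy-equivalence property survives.

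The crucial observation is that the linear/antilinear type of $\CL_Q$ is independent of $Q\in\Her(\BC^n)$. This can be detected from the action of $\Phi_Q$ on the cross-ratio of a fixed quadruple of collinear points in $P\BC^n$ whose cross-ratio is non-real: a linear $\CL_Q$ preserves cross-ratios, while an antilinear one conjugates them, so whether the output cross-ratio equals the input or its conjugate determines the type. Since $\Phi_Q$ depends continuously on $Q$ by Lemma \ref{key1} and $\Her(\BC^n)$ is connected, the type is the same for every $Q$. If that common type is antilinear, let $\sigma$ denote the standard complex conjugation on $\BC^n$ and replace $\phi$ by $\phi'(A)=\sigma\phi(A)\sigma^{-1}$. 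Using $\det(\sigma B\sigma^{-1})=\overline{\det B}$ and $\sigma\SU_n\sigma^{-1}=\SU_n$, one checks that $\phi'$ maps $\SL_n\BC$ into $\SU_n$, is a homotopy equivalence (composition of $\phi$ with a self-homeomorphism of $\SU_n$), preserves commutativity, and sends the finite center to itself. The new auxiliary map $\CL'_Q=\sigma\circ\CL_Q$ is linear, and remains a $Q$-to-$Q_0$ isometry via the computation $Q_0(\sigma\CL_Q u,\sigma\CL_Q v)=\overline{Q_0(\CL_Q u,\CL_Q v)}=\overline{\overline{Q(u,v)}}=Q(u,v)$.

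Once every $\CL_Q$ is linear, $\CL_{Q_0}$ is a linear $Q_0$-isometry of $\BC^n$ with determinant $1$, hence an element $U\in\SU_n$. I then replace $\phi$ by $\phi''(A)=U^{-1}\phi(A)U$. Conjugation by $U$ is a self-homeomorphism of both $\SL_n\BC$ and $\SU_n$, fixes the center pointwise, and preserves commutativity, so every hypothesis of Proposition \ref{no-homotopy} still holds. The effect on the auxiliary data is $\CL''_Q=U^{-1}\CL_Q$, which remains linear and isometric, and now satisfies $\CL''_{Q_0}=U^{-1}\CL_{Q_0}=\Id$. The only non-routine ingredient in the argument is the constancy of the type, which is where we need both the continuity of $\Phi_Q$ in $Q$ and the connectedness of $\Her(\BC^n)$; without it, a single conjugation by $\sigma$ would not suffice to linearize $\CL_Q$ simultaneously for every $Q$. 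The rest is bookkeeping about how the data of Proposition \ref{key} transform under the two natural symmetries $\phi\mapsto\sigma\phi\sigma^{-1}$ and $\phi\mapsto U^{-1}\phi U$.
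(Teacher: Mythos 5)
Your proof is correct and follows essentially the same route as the paper: normalize $\phi$ by conjugation and use the continuity of $\Phi_Q$ in $Q$ together with the connectedness of $\Her(\BC^n)$ to show the linear/antilinear type cannot jump. The only cosmetic differences are that you detect the type via cross-ratios rather than via the induced map on $H^2(P\BC^n)$ (where an antilinear map acts by $-\Id$), and that you perform two conjugations (by complex conjugation $\sigma$, then by $U=\CL_{Q_0}\in\SU_n$), whereas the paper conjugates once by the possibly antilinear $\CL_{Q_0}$, which simultaneously achieves $\CL_{Q_0}=\Id$ and anchors linearity at $Q_0$.
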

\begin{proof}
Notice that $\CL_{Q_0}$ is a, possibly antilinear, isometry of $(\BC^n,Q_0)$ by Proposition \ref{key} (1). In particular, $\CL_{Q_0}^{-1}={ }^t\overline{\CL_{Q_0}}$ and 
$$\SU_n=\CL_{Q_0}\SU_n \CL_{Q_0}^{-1}$$
Considering the map
$$\hat\phi:\SL_n\BC\to \SU_n,\ \ \hat\phi(A)=\CL_{Q_0}\phi(A)\CL_{Q_0}^{-1}$$
we obtain a homotopy equivalence preserving commutativity, fixing the center of $\SL_n\BC$, and with
$$\hat\CL_{Q_0}=\Id$$
where $\hat\CL_Q$ is the map associated to $\hat\phi$ and $Q\in\Her(\BZ^n)$ by Proposition \ref{key}. 

We claim that $\hat\CL_Q$ is linear for all $Q$; equivalently, we have to rule out that it is antilinear. By Proposition \ref{key}, the projective transformation $\hat\Phi_Q$ associated to $\hat\CL_Q$ depends continuously of $Q$. In particular, $H^2(\hat\Phi_Q):H^2(P\BC^n)\to H^2(P\BC^n)$ is the identity for all $Q$ because this is true for $Q_0$ and $\Her(\BC^n)$ is connected. On the other hand, if $F:\BC^n\to\BC^n$ is antilinear and $f:P\BC^n\to P\BC^n$ is the associated projective transformation we have that $H^2(f)=-\Id$. We have proved that $\hat\CL_Q$ is not antilinear, as we needed to show.
\end{proof}

From now on suppose that we have a homotopy equivalence $\phi$ as in Proposition \ref{no-homotopy} with the property that $\CL_Q$ is linear for all $Q$ and that $\CL_{Q_0}=\Id$.

We will derive a contradiction from the following observation:

\begin{lem}\label{compatible}
Suppose that $Q,Q'\in\Her(\BC^n)$ and $L\in P\BC^n$ are such that $L^{\perp_Q}=L^{\perp_{Q'}}$. Then $\CL_Q(L)=\CL_{Q'}(L)$.
\end{lem}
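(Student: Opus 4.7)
The key observation is that the subgroup $\sigma_Q^L$ depends on $Q$ only through the pair $(L, L^{\perp_Q})$. Under the hypothesis $L^{\perp_Q} = L^{\perp_{Q'}}$, the two subgroups $\sigma_Q^L$ and $\sigma_{Q'}^L$ literally coincide as subsets of $\SL_n\BC$, and the lemma then follows from the uniqueness clause of Lemma \ref{key1}.

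To make this precise I would first verify the claim $\sigma_Q^L = \sigma_{Q'}^L$. An element of $\sigma_Q^L$ has the form $A = \lambda\pi_L + \mu\pi_{L^{\perp_Q}}$ with $\lambda,\mu\in\BC$. Since $L\oplus L^{\perp_Q}=L\oplus L^{\perp_{Q'}}$ as direct sum decompositions of $\BC^n$, the projections $\pi_L$ and $\pi_{L^{\perp_Q}}=\pi_{L^{\perp_{Q'}}}$ coincide as linear maps, so the collection of $A$ of this form is the same for $Q$ and $Q'$. The condition $A\in\SL_n\BC$ reads $\lambda\mu^{n-1}=1$ independently of $Q$, and the $Q$-unitarity condition, using the $Q$-orthogonality of $L$ and $L^{\perp_Q}$ together with the fact that $Q$ restricts non-degenerately to each factor, reduces to $|\lambda|=|\mu|=1$. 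The same reduction applies verbatim to $Q'$, so $\sigma_Q^L=\sigma_{Q'}^L$.

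Having established this identity of subgroups, I would invoke the defining relation \eqref{eq-defi} of Lemma \ref{key1} twice, obtaining
$$\phi(\sigma_Q^L)\subset\sigma_{Q_0}^{\Phi_Q(L)}\qquad\text{and}\qquad\phi(\sigma_{Q'}^L)\subset\sigma_{Q_0}^{\Phi_{Q'}(L)}.$$
Because the left-hand sides are equal, both $\Phi_Q(L)$ and $\Phi_{Q'}(L)$ are lines $L'\in P\BC^n$ with $\phi(\sigma_Q^L)\subset\sigma_{Q_0}^{L'}$. The uniqueness of such a line, which is the content of the uniqueness clause in Lemma \ref{key1} (itself a consequence of the uniqueness of $L=V_1$ in Lemma \ref{big-center2} applied to $X=\phi(\sigma_Q^L)$), forces $\Phi_Q(L)=\Phi_{Q'}(L)$. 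Proposition \ref{key}(1) then yields $\CL_Q(L)=\Phi_Q(L)=\Phi_{Q'}(L)=\CL_{Q'}(L)$.

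There is no substantial obstacle here; the only point demanding care is the reduction of $Q$-unitarity on $\lambda\pi_L+\mu\pi_{L^{\perp_Q}}$ to $|\lambda|=|\mu|=1$, which uses that $Q$ is definite on each of $L$ and $L^{\perp_Q}$ so that $|\lambda|^2 Q(u,u)=Q(u,u)$ for a nonzero $u\in L$ actually forces $|\lambda|=1$. This is built into the standing assumption $Q\in\Her(\BC^n)$.
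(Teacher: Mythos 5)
Your proof is correct and follows essentially the same route as the paper: the hypothesis $L^{\perp_Q}=L^{\perp_{Q'}}$ forces $\sigma_Q^L=\sigma_{Q'}^L$, and the defining property \eqref{eq-defi} of $\Phi$ (whose uniqueness comes from Lemma \ref{big-center2}) then gives $\Phi_Q(L)=\Phi_{Q'}(L)$, hence $\CL_Q(L)=\CL_{Q'}(L)$. The paper states this in one line, while you spell out the verification that the two subgroups coincide; that extra detail is accurate but not a different argument.
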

\begin{proof}
The assumption that $L^\perp=L^{\perp_Q}$ implies that
$$\sigma_Q^L=\sigma_{Q'}^L\subset\SU(Q)\cap\SU(Q')$$
From the defining equation \eqref{eq-defi} of $\Phi_Q$ we get that $\Phi_Q(L)=\Phi_{Q'}(L)$, and hence that $\CL_Q(L)=\CL_{Q'}(L)$.
\end{proof}

As a first consequence we deduce that $\CL_Q$ is essentially given by the polar part of any isometry between $(\BC^n,Q)$ and $(\BC^n,Q_0)$. 

\begin{lem}\label{no-homotopy-polar}
If $A:(\BC^n,Q)\to(\BC^n,Q_0)$ is any $\BC$-linear isometry with $\det A=1$ and $P$ is the polar part of $A$, then there is $U\in \SU_n$ with $\CL_Q=UP$. Moreover, $U$ acts as a homothety on every eigenspace of $P$ and hence commutes with $P$.
\end{lem}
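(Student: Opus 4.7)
My plan is to prove the two assertions separately: existence of $U$ by manipulating the polar decomposition, and the commutation $UP = PU$ by an appeal to Lemma \ref{compatible}.

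For the first assertion, write the right polar decomposition $A = WP$, where $P = (A^*A)^{1/2}$ is positive Hermitian and $W = AP^{-1}$ is unitary. Since $\det A = 1$, $\det P$ is a positive real, and $|\det W| = 1$, the determinants of $W$ and $P$ must both equal $1$; in particular $W \in \SU_n$. By Proposition \ref{key}(1) together with Lemma \ref{no-anti}, the map $\CL_Q:(\BC^n, Q) \to (\BC^n, Q_0)$ is a $\BC$-linear isometry with $\det \CL_Q = 1$. Hence $\CL_Q A^{-1}$ is a self-isometry of $(\BC^n, Q_0)$ of determinant $1$, i.e.\ an element of $\SU_n$, and so $U := \CL_Q P^{-1} = (\CL_Q A^{-1}) W$ lies in $\SU_n$ and satisfies $\CL_Q = U P$.

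For the second assertion, let $E_1, \dots, E_r$ denote the eigenspaces of $P$ with positive eigenvalues $\lambda_1, \dots, \lambda_r$. The key computation is that, since $A$ is an isometry, $Q(u,v) = \langle Au, Av\rangle = \langle u, P^2 v\rangle$; so for any line $L = \BC w_0 \in P\BC^n$ one has $L^{\perp_Q} = (P^2 w_0)^\perp = (P^2 L)^\perp$. If $L \subset E_i$, then $P^2 L = \lambda_i^2 L = L$ as projective lines, whence $L^{\perp_Q} = L^\perp = L^{\perp_{Q_0}}$. Applying Lemma \ref{compatible} with $Q' = Q_0$, and recalling that $\CL_{Q_0} = \Id$ by Lemma \ref{no-anti}, yields $\CL_Q(L) = L$. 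The factorization $\CL_Q = UP$ together with $PL = L$ projectively then forces $UL = L$. Hence $U$ fixes every projective line inside $E_i$, so $U|_{E_i}$ is a scalar multiple of the identity; in particular $U$ preserves the eigenspace decomposition of $P$ and acts as a homothety on each summand, so $U$ commutes with $P$.

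The only conceptual point is the identification of those lines $L$ satisfying $L^{\perp_Q} = L^\perp$ as precisely those contained in an eigenspace of $P$; once this translation is made, Lemma \ref{compatible} does all the real work and the rest is formal.
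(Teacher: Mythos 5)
Your proof is correct and follows essentially the same route as the paper: showing $U=\CL_QP^{-1}\in\SU_n$ (the paper does this by observing directly that $P:(\BC^n,Q)\to(\BC^n,Q_0)$ is an isometry, which is equivalent to your factorization through $A=WP$), then noting that lines $L$ inside eigenspaces of $P$ satisfy $L^{\perp_Q}=L^\perp$, invoking Lemma \ref{compatible} with $\CL_{Q_0}=\Id$ to get $\CL_Q(L)=L$, and concluding that $U$ fixes all such lines, acts as a homothety on each eigenspace, and hence commutes with $P$. No gaps; the explicit computation $L^{\perp_Q}=(P^2L)^\perp$ is a clean way of phrasing the paper's observation that the eigenspaces of $P$ are both $Q$- and $Q_0$-orthogonal.
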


Recall that the polar part $P$ of $A$ is the unique positive definite hermitian matrix with ${ }^t\bar PP={ }^t\bar A A$. We have $A=UP$ for some $U\in\SU_n$.

\begin{proof}
To begin with, observe that $P:(\BC^n,Q)\to(\BC^n,Q_0)$ is also an isometry:
$$Q_0(Pv,Pw)={ }^t\bar v{ }^t\bar PP\bar w={ }^t\bar v{ }^t\bar AA\bar w=Q_0(Av,Aw)=Q(v,w)$$
In particular, $U=\CL_QP^{-1}\in\SU_n$. 

We claim that $U$ acts as a homothety on every eigenspace of $P$; recall that $P$ is positive definite hermitian and hence diagonalizable. Moreover, the eigenspaces of $P$ are both $Q_0$-orthogonal and $Q$-orthogonal to each other. This implies that if $L\in P\BC^n$ is contained in an eigenspace of $P$ we have $L^\perp=L^{\perp_Q}$. By Lemma \ref{compatible} and the normalization $\CL_{Q_0}=\Id$ we have thus $\CL_Q(L)=\CL_{Q_0}(L)=L$ and hence
$$U(L)=(\CL_Q(P^{-1}(L))=\CL_Q(L)=L$$
In other words, $U\in\SU_n$ fixes every 1-dimensional subspace of every eigenspace of $P$ and hence acts as a homothety on each one of these eigenspaces. This implies directly that $PU=UP$.
\end{proof}

We are now ready to conclude the proof of Proposition \ref{no-homotopy}. In order to do that consider, for $t\in\BR$, the matrix
$$A_t=
\left(
\begin{array}{ccc}
e^t  & e^t  & 0 \\
0 & e^{-t}  & 0 \\
0 & 0  & \Id_{n-2}
\end{array}
\right)\in\SL_n\BC$$
where $\Id_{n-2}$ is the $(n-2)$-by-$(n-2)$ identity matrix. Consider also the associated family of quadratic forms
$$Q_t(v,w)=Q_0(A_tv,A_tw)$$
and the linear subspaces 
\begin{align*}
&S_1=\BC\times\{0\}\times\dots\times\{0\}\\
&S_2=\{(-x,x,0\dots,0)\vert x\in\BC\}\\
&S_3=\{0\}\times\{0\}\times\BC^{n-2}
\end{align*}
Observe that the images 
\begin{align*}
&A_tS_1=S_1\\
&A_tS_2=\{0\}\times\BC\times\{0\}\times\dots\times\{0\}\\
&A_tS_3=S_3
\end{align*}
of $S_1,S_2,S_3$ under $A_t$ are independent of $t$ and $Q_0$-orthogonal. It follows that the spaces $S_1,S_2$ and $S_3$ are pairwise $Q_t$-orthogonal for all $t$. In particular we have $S_1^{\perp_{Q_t}}=S_2\oplus S_3$ for all $t$. Lemma \ref{compatible} implies that 
\begin{equation}\label{eq-blablabla}
\CL_{Q_t}(S_1)=\CL_{Q_s}(S_1)
\end{equation}
for all $t$ and $s$. We will obtain a contradiction to \eqref{eq-blablabla}. 

Consider the polar part
$$P_t=\sqrt{
\left(
\begin{array}{ccc}
e^{2t}  & e^{2t}  & 0 \\
e^{2t} & e^{2t}+e^{-2t}  & 0 \\
0 & 0  & \Id_{n-2}
\end{array}
\right)}$$
of $A_t$ and notice that it has three distinct eigenvalues $\lambda_1^t\sim e^{t}$, $\lambda_1^t\sim e^{-t}$ and $\lambda_3=1$. The $\lambda_3$-eigenspace is the subspace $S_3$ and hence does not depend on $t$. Denote by $E_t$ the $\lambda_1^t$ eigenspace of $P_t$. A computation shows that 
\begin{align*}
\lim_{t\to\infty}E_t&=\lim_{t\to\infty}P_t(S_1)=\{(x,x,0,\dots,0)\vert x\in\BC\}\\
\lim_{t\to -\infty}E_t&=\lim_{t\to-\infty}P_t(S_1)=S_1
\end{align*}
It follows that for any choice of $U_t\in\SU_n$ preserving the eigenspace $E_t$ of $P_t$ we have
\begin{align*}
\lim_{t\to\infty} U_tP_t(S_1)&=\{(x,x,0,\dots,0)\vert x\in\BC\}\\
\lim_{t\to-\infty} U_tP_t(S_1)&=S_1
\end{align*}
On the other hand, by Lemma \ref{no-homotopy-polar}, there is, for all $t$, some $U_t\in\SU_n$ fixing the eigenspaces of $P_t$ and with $\CL_{Q_t}=U_tP_t$. Hence we have
$$\lim_{t\to\infty}\CL_{Q_t}(S_1)=\{(x,x,0,\dots,0)\vert x\in\BC\}\neq S_1=\lim_{t\to-\infty}\CL_{Q_t}(S_1)$$
contradicting \eqref{eq-blablabla}. This contradiction concludes the proof of Proposition \ref{no-homotopy}.\qed

\begin{bem}
The end of Proposition \ref{no-homotopy} is perhaps a little bit disappointing but we beg the reader to think how to prove that $\GL_2\BR$ is not commutative. The reader would probably just write two random matrices $A$ and $B$, multiply them, and see that $AB\neq BA$. Giving an example to show that something preposterous does not hold is neither conceptual nor sophisticated, but perhaps the only available argument; or at least the simplest.
\end{bem}

\bigskip

\medskip

\noindent{\small Department of Mathematics, University of Michigan, Ann Arbor \newline \noindent
\texttt{jsouto@umich.edu}}

\end{document}